 \newtheorem{thm}{Theorem}[section]
 \newtheorem{cor}[thm]{Corollary}
 \newtheorem{lem}[thm]{Lemma}
 \newtheorem{prop}[thm]{Proposition}
 \theoremstyle{definition}
 \newtheorem{defn}[thm]{Definition}
 \theoremstyle{remark}
 \numberwithin{equation}{section}
 \newcommand{\R}{\mathbb{R}}
 \newcommand{\Z}{\mathbb{Z}}
 \newcommand{\C}{\mathbb{C}}
 \newcommand{\F}{\mathcal{F}}
 \newcommand{\Zc}{\mathcal{Z}}
 \newcommand{\Rc}{\mathcal{R}}
 \newcommand{\E}{\mathcal{E}}
 \newcommand{\Mf}{\mathfrak{M}}
\begin{document}
\title[Equivariant Bordism]
 {Equivariant Bordism of 2-Torus Manifolds and Unitary Toric Manifolds}
\author{ Bo Chen, Zhi L\"u, Qiangbo Tan}

\address{School of Mathematics and Statistics, Huazhong University of Science and Technology, Wuhan, China} 
\email{bobchen@hust.edu.cn}
\address{School of Mathematical Science, Fudan University, Shanghai, China} 
\email{zlu@fudan.edu.cn}
\address{College of Science, Wuhan University of Science and Technology, Wuhan, China}
\email{tanqb@wust.edu.cn}
\thanks{This work is supported by grants from NSFC11801379}
\keywords{Equivariant Bordism, 2-Torus Manifold, Unitary Toric Manifold, Small Cover, Quesitoric Manifold}


\begin{abstract}
The equivariant bordism classification of manifolds with group actions is an essential subject in the study of transformation groups. We are interesting in the action of 2-torus group $\mathbb{Z}_2^n$ and torus group $T^n$, and study the equivariant bordism of 2-torus manifolds and unitary toric manifolds. In this paper, we give a new description of the group $\mathcal{Z}_n(\mathbb{Z}_2^n)$ of 2-torus manifolds, and determine the dimention of $\mathcal{Z}_n(\mathbb{Z}_2^n)$ as a $\mathbb{Z}_2$-vector space. With the help of toric topology, L\"u and Tan proved that the bordism groups $\mathcal{Z}_n(\Z_2^n)$ are generated by small covers. We will give a new proof to this result. These results can be generalized to the equivariant bordism of unitary toric manifolds, that is, we will give a new description of the group $\mathcal{Z}_n^U(T^n)$ of unitary torus manifolds, and prove that $\mathcal{Z}_n^U(T^n)$ can be generated by quasitoric manifolds with omniorientations.
\end{abstract}

\maketitle

\section{Introduction}

An $n$-dimensional {\em 2-torus manifold} is a smooth closed $n$-dimensional manifold equipped with an effective smooth $\Z_2^n$-action, so its fixed point set is empty or consists of isolated points (see \cite{L1,L2}). A {\em unitary $T^n$-manifold} is an oriented closed smooth manifold $M^m$  with an effective $T^n$-action such that its tangent bundle admits a $T^n$-equivariant stable complex structure, where $T^n = (S^1)^n$ is the compact torus group of dimention $n$. A unitary $T^n$-manifold $M^{2n}$ of dimension 2n with nonempty fixed set is called a {\em unitary torus manifold} or {\em unitary toric manifold} (see \cite{Masuda,HMasuda}). All the group actions in this article are assumed to be effective.

Davis and Januszkiewicz \cite{DJ} developed the theory of small covers and quasitoric manifolds, which are topological versions of real toric varieties and toric varieties, respectively. A small cover is a special case of 2-torus manifolds. A quasitoric manifold can be equipped with an omniorientation structure, this omniorientation determines a stable complex structure (see \cite{BPR1}). So a quasitoric manifold with an omniorientation is a special case of unitary toric manifolds.

In Section 2, we will focus on the unoriented equivarint bordism of 2-torus manifolds. In the 1960s Conner and Floyd \cite{CF1} begun the study of unoriented equivarint bordism theory of $\Z_2^n$ manifolds, they introduced a graded commutative algebra over $\Z_2$ with unit, $\Zc_*(\Z_2^n) = \bigoplus_{m\geq 0} \Zc_m(\Z_2^n)$, where the equivariant bordism group $\Zc_m(\Z_2^n)$ is consists of $\Z_2^n$-equivariant unoriented bordism classes of all $m$-dimensional 2-torus manifolds with effective action and fixing finite set. The addition and multiplication on $\Zc_*(\Z_2^n)$ are defined to be the disjoint union and the cartesian product with diagonal $\Z_2^n$-actions, respectively. For $n\leq 2$, Conner and Floyd showed that $\Zc_*(\Z_2) \cong Z_2$ and $\Zc_*(\Z_2^2) \cong \Z_2[u]$, where $u$ denotes the class of $\R P^2$ with the standard $\Z_2^2$-action. The equivariant unoriented bordism group of all $n$-dimensional 2-torus manifolds is $\Zc_n(\Z_2^n)$, which was also denoted by $\Mf_n$ in \cite{L1}. The group $\Zc_n(\Z_2^n)$ was determined for $n=3$ by L\"u \cite{L1} and for general $n\geq 3$ by L\"u and Tan \cite{LT1}. As showed in \cite{LT1}, $\Zc_n(\Z_2^n)$ is generated by all small covers of dimention $n$, and each class of $\Zc_n(\Z_2^n)$ contains a small cover as its representative. A computer algorithm was given there to search for the generators of $\Zc_n(\Z_2^n)$ and calculate the dimension $\text{dim}_{\Z_2}(\Zc_n(\Z_2^n))$. For $n>4$, the datas are too complex for the algorithm to work out. How to calculate $\text{dim}_{\Z_2}(\Zc_n(\Z_2^n))$ remains a problem.

As stated by Davis and Januszkiewicz in \cite{DJ}, there is a universal complex $X(\Z^n)$ such that every small cover can be obtained as a pullback from $X(\Z_2^n)$ via a suitable map. To prove that $\Zc_n(\Z_2^n)$ is generated by all small covers, L\"u and Tan \cite{LT1} defined a boundary operator $d_*$ on the dual of the Conner–Floyd algebra $\Rc_*(\Z_2^n)$, where $\Rc_*(\Z_2^n) = \bigoplus_{m\geq 0} \Rc_m(\Z_2^n)$ is the graded polynomial algebra over $\Z_2$ generated by all irreducible $\Z_2^n$-representations. Then the existence theorem of tom Dieck is equivalent to the condition $d_*f^*=0$ (\cite{LT1} Theorem 7.2). The differential $d_*$ is almost the same as the differential $\partial_*$ on the chain complex $(C_*(X(\Z_2^n)),\partial_*)$ with coefficient $\Z_2$, and $f^*$ corresponds to a closed chain, which represents an homology element in $H_{n-1}(X(\Z_2^n);\Z_2)$. With these observations, we will show the following main theorem for the equivariant bordism of 2-torus manifolds.
\begin{thm}\label{THM1-1}
    For each $n\geq 1$, the equivariant bordism group $\Zc_n(\Z_2^n)$ of 2-torus manifolds is isomorphic to the homology group  $H_{n-1}(X(\Z_2^n);\Z_2)$ of the universal $\Z_2^n$-complex.
    $$\Zc_n(\Z_2^n) \cong H_{n-1}(X(\Z_2^n);\Z_2)$$
\end{thm}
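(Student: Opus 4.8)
The plan is to realise the isomorphism explicitly, through the fixed-point data, along the lines indicated before the statement. First recall, from \cite{DJ} (and as used in \cite{LT1}), the structure of the universal complex: $X(\Z_2^n)$ is the simplicial complex whose vertex set is $\Z_2^n\setminus\{0\}$ and whose simplices are the linearly independent subsets. An independent subset has at most $n$ elements, and the $n$-element ones are exactly the bases of $\Z_2^n$; hence $\dim X(\Z_2^n)=n-1$ and $C_k(X(\Z_2^n);\Z_2)=0$ for all $k\ge n$. Consequently
\[
H_{n-1}(X(\Z_2^n);\Z_2)=\ker\!\big(\partial_{n-1}\colon C_{n-1}(X(\Z_2^n);\Z_2)\To C_{n-2}(X(\Z_2^n);\Z_2)\big),
\]
the group of $(n-1)$-cycles. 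So it suffices to identify $\Zc_n(\Z_2^n)$ with the $\Z_2$-space of $(n-1)$-cycles of $X(\Z_2^n)$.

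Next I would set up the comparison map. To a closed $n$-dimensional 2-torus manifold $M$ attach the chain
\[
\Phi(M)=\sum_{p\in M^{\Z_2^n}}\langle\alpha_1^p,\dots,\alpha_n^p\rangle\ \in\ C_{n-1}(X(\Z_2^n);\Z_2),
\]
where $\alpha_1^p,\dots,\alpha_n^p$ are the tangential weights at the isolated fixed point $p$; effectiveness of the action forces these to be a basis of $\Z_2^n$, hence a genuine $(n-1)$-simplex of $X(\Z_2^n)$. One checks that $\Phi$ is a bordism invariant: an equivariant bordism $W$ has $W^{\Z_2^n}$ a compact $1$-manifold along each arc of which the normal representation keeps a constant isomorphism type, so the two fixed points at the ends of such an arc contribute the same simplex and cancel over $\Z_2$. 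Hence $\Phi$ descends to a $\Z_2$-linear map $\Phi_*\colon\Zc_n(\Z_2^n)\To C_{n-1}(X(\Z_2^n);\Z_2)$, additive under disjoint union. Under the dictionary of \cite{LT1}, $\Phi_*[M]$ is precisely the fixed-point datum $f^*$ of $M$, sitting inside the relevant graded piece of the dual Conner--Floyd algebra $\Rc_*(\Z_2^n)$.

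Two inputs then finish the argument. First, the localization theorem of tom Dieck (equivalently, the Conner--Floyd description; see also \cite{LT1}) asserts that for $\Z_2^n$ acting on an $n$-manifold with finite stationary set the equivariant bordism class is detected by the fixed-point datum, so $\Phi_*$ is injective. Second, the existence theorem of tom Dieck, in the reformulation \cite[Theorem 7.2]{LT1}, says a fixed-point datum $f^*$ is realised by an actual 2-torus manifold if and only if $d_*f^*=0$, where $d_*$ is the boundary operator of \cite{LT1} on $\Rc_*(\Z_2^n)$. The key observation — the precise content of the remark that ``$d_*$ is almost the same as $\partial_*$'' — is that on the span of the squarefree monomials with linearly independent support, which is exactly the part of $\Rc_*(\Z_2^n)$ in which every fixed-point datum lives, the operator $d_*$ coincides with the simplicial boundary $\partial_*$ of $X(\Z_2^n)$ under the identification of such a monomial with the corresponding simplex; the two operators differ only on monomials that are non-squarefree or have linearly dependent support, and such monomials never arise from actions of $\Z_2^n$ on $n$-manifolds with isolated fixed points. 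Granting this, $\mathrm{Im}\,\Phi_*=\ker\partial_{n-1}=H_{n-1}(X(\Z_2^n);\Z_2)$, and, combined with injectivity, this yields $\Zc_n(\Z_2^n)\cong H_{n-1}(X(\Z_2^n);\Z_2)$.

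The main obstacle I anticipate is precisely this last identification of \cite{LT1}'s operator $d_*$ with the simplicial differential of $X(\Z_2^n)$ in the relevant bidegrees: one must unravel both definitions and verify that their discrepancy is supported exactly on the ``bad'' monomials invisible to fixed-point data. I would also stress that the fact that $\Phi_*$ genuinely lands in the cycles — equivalently, the necessity $d_*f^*=0$ for an actual manifold — is a global statement, best imported from the Conner--Floyd/tom Dieck theory via \cite{LT1} rather than proved by inspecting the fixed set of a single subgroup, since the fixed locus of a corank-one subgroup only records the conditions involving one weight at a time.
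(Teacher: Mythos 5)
Your proposal follows essentially the same route as the paper: identify $\Zc_n(\Z_2^n)$ with the image of the fixed-point homomorphism (Stong's theorem), invoke the L\"u--Tan reformulation of tom Dieck's existence theorem to characterize that image as the kernel of $d_n$, identify $d_*$ with the simplicial boundary of $X(\Z_2^n)$ on essential (squarefree, linearly independent support) monomials, and use that $X(\Z_2^n)$ is $(n-1)$-dimensional so that $(n-1)$-cycles equal $H_{n-1}(X(\Z_2^n);\Z_2)$. The one point to tighten is that the chain attached to $M$ should be built from the \emph{dual} bases $\{s_{p,1},\dots,s_{p,n}\}\subset\mathrm{Hom}(\Z_2,\Z_2^n)$ of the weight bases $\{\alpha_1^p,\dots,\alpha_n^p\}$, since the criterion $d_n(g^*)=0$ of L\"u--Tan is stated for the dual polynomial $g^*$ rather than for the weights themselves; with that adjustment your $\Phi_*$ is exactly the paper's composite of $\varphi_n$, dualization, and the chain isomorphism $r_{n-1}$.
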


It has already been shown in \cite{BGV} that the homotopy type of $X(\Z_2^n)$ is the wedge of $A_n$ spheres $S^{n-1}$, where $A_n$ is a number related to the dimension $n$. So as a corollary, the dimention $\text{dim}_{\Z_2}(\Zc_n(\Z_2^n)) = A_n$ can be given straightforward, this solve the problem left in \cite{LT1}. 

\begin{cor}\label{COR1-1}
    The dimention of $\Zc_n(\Z_2^n)$ as a linear space over $\Z_2$ is
     $$\dim_{\Z_2}(\Zc_n(\Z_2^n)) = A_n = (-1)^n + \sum_{i=0}^{n-1}(-1)^{n-1-i}\frac{(2^{n}-2^i)\cdots(2^n-2^0)}{(i+1)!}$$    
\end{cor}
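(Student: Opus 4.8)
The plan is to deduce the corollary from Theorem~\ref{THM1-1} together with the known homotopy type of the universal complex, and then to read off the integer $A_n$ by an Euler--characteristic count. By Theorem~\ref{THM1-1} there is an isomorphism $\Zc_n(\Z_2^n)\cong H_{n-1}(X(\Z_2^n);\Z_2)$ of $\Z_2$-vector spaces, so it is enough to compute $\dim_{\Z_2}H_{n-1}(X(\Z_2^n);\Z_2)$. By the result of \cite{BGV}, $X(\Z_2^n)$ is homotopy equivalent to a wedge $\bigvee_{A_n}S^{n-1}$ of $A_n$ copies of $S^{n-1}$; hence $\widetilde H_i(X(\Z_2^n);\Z_2)=0$ for $i\neq n-1$ and $\widetilde H_{n-1}(X(\Z_2^n);\Z_2)\cong\Z_2^{A_n}$, whence $\dim_{\Z_2}\Zc_n(\Z_2^n)=A_n$. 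It then remains only to identify $A_n$ with the stated closed formula.

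Since $X(\Z_2^n)$ is homotopy equivalent to a wedge of $A_n$ copies of $S^{n-1}$, its reduced Euler characteristic equals $\widetilde\chi(X(\Z_2^n))=(-1)^{n-1}A_n$, so that $A_n=(-1)^{n-1}\widetilde\chi(X(\Z_2^n))$. To evaluate the right-hand side I would use the simplicial model of the universal complex from \cite{DJ} (see also \cite{LT1}), which also underlies the chain complex $(C_*(X(\Z_2^n)),\partial_*)$ of the introduction: its vertices are the $2^n-1$ nontrivial irreducible $\Z_2^n$-representations, identified with the nonzero elements of $\Z_2^n$, and a $k$-element set of vertices spans a $(k-1)$-simplex precisely when the corresponding vectors are linearly independent over $\Z_2$. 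The number of ordered $(i+1)$-tuples of linearly independent vectors in $\Z_2^n$ is $\prod_{j=0}^{i}(2^n-2^j)$, so the number $c_i$ of $i$-dimensional faces of $X(\Z_2^n)$ is $c_i=\tfrac1{(i+1)!}\prod_{j=0}^{i}(2^n-2^j)$ for $0\le i\le n-1$. Substituting $\widetilde\chi(X(\Z_2^n))=-1+\sum_{i=0}^{n-1}(-1)^i c_i$ into $A_n=(-1)^{n-1}\widetilde\chi(X(\Z_2^n))$ and using $(-1)^{n-1+i}=(-1)^{n-1-i}$ yields $A_n=(-1)^n+\sum_{i=0}^{n-1}(-1)^{n-1-i}c_i$, which is exactly the asserted expression.

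I do not expect a genuine obstacle: the corollary is essentially a bookkeeping consequence of Theorem~\ref{THM1-1} and the homotopy computation of \cite{BGV}. The only steps that need care are (i) recalling the correct simplicial model of $X(\Z_2^n)$ so that the face numbers $c_i$ are counted correctly (in particular, that the simplices are \emph{unordered} linearly independent subsets, which is what produces the factor $(i+1)!$ in the denominator), and (ii) tracking the signs when passing between $\chi$, $\widetilde\chi$ and the wedge of $(n-1)$-spheres. If one wishes to avoid an explicit simplicial model altogether, the same count of $i$-cells can be read directly off the grading of the chain complex $(C_*(X(\Z_2^n)),\partial_*)$ used in the proof of Theorem~\ref{THM1-1}.
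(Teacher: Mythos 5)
Your argument is correct and follows the paper's own route: the paper's proof of Corollary~\ref{COR1-1} is precisely the two-line deduction from Theorem~\ref{THM1-1} together with the wedge-of-spheres description of $X(\Z_2^n)$ in Theorem~\ref{Thm-BGV} (quoted from \cite{BGV}), giving $\dim_{\Z_2}\Zc_n(\Z_2^n)=A_n$. Your extra Euler-characteristic derivation of the closed formula from the face counts $c_i=\tfrac{1}{(i+1)!}\prod_{j=0}^{i}(2^n-2^j)$ does not appear in the paper (which simply quotes the value of $A_n$), but it is valid and in fact confirms the formula exactly as stated in the corollary, rather than the slightly garbled factor $(2^{n-1}-2^i)\cdots(2^n-2^0)$ printed in the paper's Theorem~\ref{Thm-BGV}.
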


The homotopy type of $X(\Z_2^n)$ and the isomorphism Theorem \ref{THM1-1} inspire a new proof to a main theorem in \cite{LT1}.

\begin{cor}[\cite{LT1}, Theorem 2.5]\label{COR2-1}
    The group $\Zc_n(\Z_2^n)$ is generated by equivariant unoriented bordism classes of small covers of dimention $n$. 
\end{cor}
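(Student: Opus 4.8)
The plan is to combine the isomorphism of Theorem~\ref{THM1-1} with the homotopy description of $X(\Z_2^n)$ and with the fact, recalled in the introduction, that every small cover arises as a pullback from $X(\Z_2^n)$. First I would make explicit the chain-level picture underlying Theorem~\ref{THM1-1}: under the identification $\Zc_n(\Z_2^n)\cong H_{n-1}(X(\Z_2^n);\Z_2)$, the class of a 2-torus manifold $M^n$ corresponds to the homology class carried by the closed chain obtained from its tangential data at the fixed points (the function $f^*$ on $\Rc_*(\Z_2^n)$ from \cite{LT1}, Theorem 7.2, reinterpreted as a cycle in $C_{n-1}(X(\Z_2^n);\Z_2)$). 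In particular, for a small cover $Q$ over a simple polytope $P^n$ with characteristic function $\lambda$, the corresponding cycle is the sum, over vertices $v$ of $P^n$, of the $(n-1)$-cells of $X(\Z_2^n)$ indexed by the $n$-tuples of $\lambda$-values on the facets meeting at $v$; this is precisely the cellular chain picture under which $X(\Z_2^n)$ was built in \cite{DJ}. So the map ``small cover $\mapsto$ its characteristic cycle'' is exactly the isomorphism of Theorem~\ref{THM1-1} restricted to the subgroup generated by small covers.

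Next I would invoke the result of \cite{BGV} that $X(\Z_2^n)\simeq \bigvee_{A_n} S^{n-1}$, so that $H_{n-1}(X(\Z_2^n);\Z_2)$ is spanned by the fundamental classes of the $A_n$ wedge spheres, and more concretely by any collection of $(n-1)$-cycles forming a basis. By Corollary~\ref{COR1-1} we have $\dim_{\Z_2}\Zc_n(\Z_2^n)=A_n$, so it suffices to exhibit $A_n$ small covers whose characteristic cycles are linearly independent in $H_{n-1}(X(\Z_2^n);\Z_2)$ — equivalently, to show that the image of the ``small cover'' subgroup under the isomorphism of Theorem~\ref{THM1-1} is all of $H_{n-1}$. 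The cleanest route is: (i) show every $(n-1)$-cycle in $C_*(X(\Z_2^n);\Z_2)$ is homologous to one of the form ``characteristic cycle of a small cover''; for this one uses that a cycle is a $\Z_2$-sum of top-dimensional cells, each such cell is the ``vertex star'' data of a cone on a cross-polytope-type local model, and the cycle condition (each $(n-2)$-cell appears an even number of times) is exactly the gluing condition that lets one realize the cycle as coming from the boundary complex of a simplicial sphere with a characteristic function — i.e. from a (possibly non-polytopal, but still small-cover-supporting) small cover. Then (ii) conclude that the subgroup generated by small covers surjects onto $H_{n-1}(X(\Z_2^n);\Z_2)$, hence equals $\Zc_n(\Z_2^n)$ by Theorem~\ref{THM1-1}; and since the representative can be taken to be a single small cover in each class (connected sums of small covers are small covers), every class contains a small cover.

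The main obstacle is step (i): realizing an arbitrary $(n-1)$-cycle of $X(\Z_2^n)$ by genuine small-cover data. The cycle condition only guarantees a ``mod 2 pseudomanifold with a $\Z_2^n$-coloring'' locally, and one must check this local combinatorial object can be completed to a closed simplicial $(n-1)$-sphere (or at least a simplicial complex supporting a small cover) carrying a non-degenerate characteristic function reproducing the given cycle. I expect this to be handled by an explicit combinatorial construction — coning off, or surgering the pseudomanifold along the cells where the coloring fails non-degeneracy — together with the observation that adding a cell that bounds (a cone) changes the manifold by an equivariant bordism, so does not affect the class. A secondary, more routine point is checking carefully that the chain-level isomorphism of Theorem~\ref{THM1-1} does send a small cover to its characteristic cycle with the right signs/indices; once the dictionary between $f^*$ and cellular cycles is pinned down this is bookkeeping. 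If desired, one can shortcut step (i) entirely by instead quoting from \cite{LT1} that small covers already generate a subgroup, using Theorem~\ref{THM1-1} only to identify its rank with $A_n$; but the self-contained argument above is the intended ``new proof''.
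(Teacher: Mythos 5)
Your chain-level dictionary (a small cover over $(P^n,\lambda)$ goes to the cycle $\sum_{v}\lambda_v$ under the isomorphism of Theorem~\ref{THM1-1}) is correct and is exactly the dictionary the paper uses. But the load-bearing step of your argument, step (i) --- that \emph{every} $(n-1)$-cycle of $X(\Z_2^n)$ is homologous to the characteristic cycle of a small cover --- is not established, and as you yourself note it is the main obstacle. A mod $2$ cycle in $C_{n-1}(X(\Z_2^n);\Z_2)$ is only a pseudomanifold-type object; to feed it into the Davis--Januszkiewicz construction you need (a) a simplicial $(n-1)$-sphere whose dual is a \emph{simple convex polytope} (polytopality of the sphere is a genuine restriction, not automatic), and (b) a characteristic function that is non-degenerate at every face, not just unimodularity of the individual top cells appearing in the cycle. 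Your proposed fix (``coning off, or surgering the pseudomanifold along the cells where the coloring fails non-degeneracy'') is not a construction and does not address either point. The fallback you offer is also circular: any family of elements generates \emph{a} subgroup, so ``quoting from \cite{LT1} that small covers generate a subgroup'' has no content unless one quotes that this subgroup is all of $\Zc_n(\Z_2^n)$, which is precisely the statement being proved.

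The paper avoids your step (i) entirely: it never tries to realize an arbitrary cycle. By \cite{BGV}, $X(\Z_2^n)\simeq\bigvee_{A_n}S^{n-1}$ with explicit \emph{basic spheres}, i.e.\ simplicial $(n-1)$-sphere subcomplexes whose classes generate $H_{n-1}(X(\Z_2^n);\Z_2)$. For each basic sphere the dual is (asserted to be) the boundary of a simple polytope $P^n$, and the vertices of the sphere, being unimodular subsets of $\Z_2^n$, give a non-degenerate $J_n^{*\R}$-coloring $\lambda$; Davis--Januszkiewicz then produces a small cover over $(P^n,\lambda)$ whose characteristic cycle is that generator, and generation of $\Zc_n(\Z_2^n)\cong H_{n-1}(X(\Z_2^n);\Z_2)$ follows. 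So the realization problem is solved only for these explicit generators, which is all that is needed for ``generated by'' (note the corollary does not claim a single small cover representative in each class, so your connected-sum remark is not required). To repair your write-up, either restrict step (i) to the basic spheres of \cite{BGV} as the paper does, or supply an actual proof that arbitrary unimodularly-colored $\Z_2$-cycles can be replaced, within their homology class, by polytopal simplicial spheres with non-degenerate colorings --- a statement that is substantially stronger than the corollary itself.
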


In Section 3, we will deal with the the equivariant bordism of unitary toric manifolds. There have been many different ways to study the equivariant unitary bordism (see \cite{BPR2,CF2,tD1,Lof1,Novikov1,Novikov2,Hanke}). However, the explicit ring structure is still difficult to calculate. Let $\Zc_{*}^U(T^n) = \bigoplus_{m\geq 0}\Zc_{m}^U(T^n)$ denote the equivariant unitary bordism ring of unitary $T^n$-manifolds with finite fixed point set, then $\Zc_{n}^U(T^n)$ is exactly the equivariant unitary bordism group of unitary toric manifolds. By analogy with the 2-torus manifolds, Darby \cite{Darby} generalized the methods in \cite{LT1} to the study of $\Zc_{n}^U(T^n)$. To deal with the effect of orientations and stably complex structures of the unitary manifolds, Darby considered the exterior algebra $\Lambda_{\Z}^*(J_n^{\C})$ which is generated by the exterior product of irreducible $T^n$-representations. The boundary operator $d_*$ can be defined on the dual of $\Lambda_{\Z}^*(J_n^{\C})$, and a similar existence theorem (\cite{Darby} Theorem 8.5) with the condition $d_*f^*=0$ still holds. But the unitary case turns out to be more difficult since the existence of an infinite number of irreducible representations. For $n=1$ and 2, the group $\Zc_{n}^U(T^n)$ was shown to be generated by quasitoric manifolds with omniorientations, and each class of $\Zc_{n}^U(T^n)$ contains a quasitoric manifold with omniorientation as its representative. But for $n>2$, it can only be conjectured that each class of $\Zc_{n}^U(T^n)$ contains a quasitoric manifold with omniorientation as its representative (see \cite{Darby}).

There exists a universal complex $X(\Z^n)$ characterizing the quasitoric manifolds with omniorientations (\cite{BGV} Proposition 2.1). As in the 2-torus case, the differential $d_*$ on the dual of $\Lambda_{\Z}^*(J_n^{\C})$ corresponds to the differential $\partial_*$ on the chain complex $(C_*(X(\Z^n))$ with coefficient $\Z$, and $f^*$ corresponds to a closed chain, which represents an homology element in $H_{n-1}(X(\Z^n);\Z)$. With these observations, we will show the following main theorem for equivariant bordism of unitary toric manifolds.

\begin{thm}\label{THM2-1}
    For each $n\geq 1$, the equivariant bordism group $\Zc_{2n}^U(T^n)$ of unitary torus manifolds is isomorphic to the homology group  $H_{n-1}(X(\Z^n);\Z)$ of the universal $\Z^n$-complex $X(\Z^n)$.
    $$\Zc_{2n}^U(T^n) \cong H_{n-1}(X(\Z^n);\Z)$$
\end{thm}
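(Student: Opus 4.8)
The plan is to imitate the proof of Theorem \ref{THM1-1}, replacing $\Z_2$-coefficients with $\Z$-coefficients and the algebra $\Rc_*(\Z_2^n)$ with the exterior algebra $\Lambda_\Z^*(J_n^\C)$ studied by Darby. First I would recall Darby's equivariant bordism description: a class in $\Zc_{2n}^U(T^n)$ is determined by the data of its fixed points together with the isotropy representations and the omniorientation signs, and this is encoded as an element $f^*$ in the dual of $\Lambda_\Z^n(J_n^\C)$. Darby's existence theorem (\cite{Darby}, Theorem 8.5) says such an $f^*$ is realized by a unitary torus manifold precisely when $d_* f^* = 0$, where $d_*$ is the boundary operator dual to the "blow-up/collapse" map on exterior products of irreducible $T^n$-representations. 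So $\Zc_{2n}^U(T^n)$ is the kernel of $d_*$ modulo the image of $d_*$ from degree $n+1$ — i.e. a homology group of the complex $(\Lambda_\Z^{*}(J_n^\C)^\vee, d_*)$ in degree $n$; I would first make that identification precise, taking care that the relations defining the quotient (boundaries of equivariant bordisms) correspond exactly to $\mathrm{im}\, d_*$.

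Next I would set up the comparison with the cellular chain complex of the universal complex $X(\Z^n)$. By \cite{BGV} (Proposition 2.1) $X(\Z^n)$ is the complex whose cells correspond to towers of primitive vectors / faces of the fan data classifying quasitoric manifolds with omniorientations, and whose cellular differential $\partial_*$ is given by the signed face operators. The key lemma — which I would state and prove as a chain-level statement — is that there is an isomorphism of chain complexes (or at least of the relevant truncations) between $(C_*(X(\Z^n);\Z), \partial_*)$ shifted by one degree and $(\Lambda_\Z^{*}(J_n^\C)^\vee, d_*)$, matching an $(n-1)$-cell of $X(\Z^n)$ with a generator of $\Lambda_\Z^n(J_n^\C)^\vee$, and matching $\partial_*$ with $d_*$ up to the dimension shift. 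Granting this, a closed chain representing a class in $H_{n-1}(X(\Z^n);\Z)$ is exactly a cocycle $f^*$ with $d_* f^* = 0$, and the boundaries match, so $H_{n-1}(X(\Z^n);\Z) \cong \ker d_* / \mathrm{im}\, d_* \cong \Zc_{2n}^U(T^n)$. This is the integral analogue of the argument sketched in the introduction for Theorem \ref{THM1-1}, so wherever the $\Z_2$-case used that a sign did not matter, here I must check the signs genuinely agree — this bookkeeping is where most of the work lies.

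The main obstacle I anticipate is precisely this sign/orientation matching, compounded by the fact (emphasized in the introduction) that $J_n^\C$ is infinite: there are infinitely many irreducible $T^n$-representations, so $\Lambda_\Z^*(J_n^\C)$ and $X(\Z^n)$ are infinite-dimensional/infinite complexes, and I must make sure the duality $(\,\cdot\,)^\vee$ and the passage to homology behave well (e.g. that $f^*$ has finite support, coming from the finite fixed-point set, so it lies in the algebraic dual on which $d_*$ acts, and that $\mathrm{im}\, d_*$ is computed with the same finiteness convention). I would handle this by working throughout with finitely-supported functionals and checking that $d_*$ preserves finite support, so that the whole identification takes place inside a well-defined (if infinitely generated) chain complex. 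A secondary point to verify is that the omniorientation data on a quasitoric manifold corresponds bijectively to the extra $\Z$ (versus $\Z_2$) worth of information in the cells of $X(\Z^n)$ over $X(\Z_2^n)$ — i.e. that replacing $\mathbb{Z}_2$ by $\mathbb{Z}$ on the topological side is exactly the effect of remembering orientations and the stably complex structure on the bordism side; this is implicit in \cite{BGV} Proposition 2.1 but should be spelled out so that the isomorphism is canonical rather than just a dimension count.
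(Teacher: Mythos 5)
Your overall route (Darby's monomorphism plus his existence theorem, compared at the chain level with the simplicial boundary of $X(\Z^n)$) is the same as the paper's, but there is a genuine structural error in how you turn this into a homology group. You identify $\Zc_{2n}^U(T^n)$ with $\ker d_*/\mathrm{im}\,d_*$ in the dual of the \emph{full} exterior algebra $\Lambda_{\Z}^*(J_n^{\C})$, with ``boundaries of equivariant bordisms'' supposedly accounting for the image of $d_*$ from degree $n+1$. This is wrong on two counts. First, the bordism relation is already completely absorbed by injectivity: Darby's theorem says $\varphi^U_*$ is a monomorphism, so bordant unitary toric manifolds have \emph{equal} (not merely homologous) fixed-point data; combined with the existence theorem this gives $\Zc_{2n}^U(T^n)\cong\{\text{faithful }g:\ d(g^*)=0\}=\ker d$, with no further quotient to take. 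Second, the key lemma you propose --- a chain isomorphism between $C_*(X(\Z^n);\Z)$ (shifted) and the dual of the full exterior algebra --- is false: the $(m-1)$-simplices of $X(\Z^n)$ correspond only to monomials on \emph{unimodular} sets, whereas $\Lambda_{\Z}^*(J_n^{\C})$ also contains wedges of linearly dependent or non-unimodular vectors, and it has a nonzero part in degree $n+1$. Taking homology of the full complex in degree $n$ gives the wrong answer: for $n=2$ the $1$-cycle on the vertices $e_1,e_2,e_1+e_2$ (the characteristic data of $\C P^2$ over the triangle) is a nonzero class in $H_1(X(\Z^2);\Z)$, since $X(\Z^2)$ is $1$-dimensional and hence has no boundaries in degree $1$; yet in the full exterior complex it equals $d\bigl(e_1\wedge e_2\wedge(e_1+e_2)\bigr)$, so your quotient would kill it, even though $[\C P^2]\neq 0$ in $\Zc_4^U(T^2)$ --- indeed killing it contradicts the injectivity of $\varphi^U$ that you yourself invoke.

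The repair is exactly the paper's argument: work with the subcomplex $\E_*(J_n^{*\C})$ spanned by \emph{essential} monomials on the dual (cocharacter) side. This subcomplex is preserved by Darby's differential, and the standard identification $J_n^{*\C}\cong\Z^n$ sends its monomials to unimodular simplices, giving a chain isomorphism with $(C_*(X(\Z^n)),\partial_*)$ over $\Z$ --- here your sign-matching concern is the genuine content, since the alternating signs in Darby's $d$ must agree with the simplicial boundary with its vertex orderings, and the signs $\varepsilon(p)$ are absorbed by normalizing each fixed-point basis so that $\det[\tau_{p,1}\dots\tau_{p,n}]=\varepsilon(p)$. Since $X(\Z^n)$ is $(n-1)$-dimensional, there is nothing above the top degree, so faithful cycles automatically represent homology classes, and one concludes $\Zc_{2n}^U(T^n)\cong \mathrm{Im}\,\varphi_n^U\cong\ker d_n\cong H_{n-1}(X(\Z^n);\Z)$. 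Your finite-support worries are fine but peripheral (fixed-point sets are finite, so all chains involved are finite), and your hedge about ``relevant truncations'' does not fix the issue: what is needed is the essential subcomplex, not a degree truncation of the full exterior algebra.
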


The homotopy type of $X(\Z^n)$ was proved to be the wedge of countably infinite spheres $S^{n-1}$ (see \cite{BGV}). So for $n\geq 2$, we have $\Zc_2^U(T^1) \cong \Z$ and $\Zc_{2n}^U(T^n)$ is isomorphic to a countably infinite direct sum of $\Z$ (Corollary \ref{COR3}). The homotopy type of $X(\Z^n)$ and Theorem \ref{THM2-1} inspire that $\Zc_{2n}^U(T^n)$ is generated by quasitoric manifold with omniorientations (see Proposition \ref{PROP1}). The generalized connect sums introduced by Buchstaber-Ray \cite{BPR1} and Darby \cite{Darby} suggest that the sum of two class $[M_1]$ and $[M_2]$ can still be represented by a quasitoric manifold with omniorientations, if $[M_1]$ and $[M_2]$ are represented by quasitoric manifolds with omniorientations. So we can prove the following proposition and solve the conjecture that $\Zc_n^U(T^n)$ is generated by quasitoric manifolds with omniorientations . 

\begin{prop}\label{PROP2-1}
    Each class of $\Zc_{2n}^U(T^n)$ contains an omnioriented quasitoric manifold as its representative.
\end{prop}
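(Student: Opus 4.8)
The plan is to combine Theorem~\ref{THM2-1}, Proposition~\ref{PROP1}, and the generalized equivariant connected sum. By Theorem~\ref{THM2-1} together with the computation of the homotopy type of $X(\Z^n)$ in \cite{BGV}, the group $\Zc_{2n}^U(T^n)$ is free abelian on a countable basis, and by Proposition~\ref{PROP1} a generating set is represented by omnioriented quasitoric manifolds. It therefore suffices to show that the subset $\mathcal{Q}\subseteq\Zc_{2n}^U(T^n)$ of classes admitting an omnioriented quasitoric representative is a subgroup, i.e.\ that it is closed under negation and under addition; then $\mathcal{Q}=\Zc_{2n}^U(T^n)$, which is the assertion. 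Writing an arbitrary class as a finite sum $\alpha=\sum_{i=1}^{r}\varepsilon_i[M_i]$ with $\varepsilon_i\in\{\pm1\}$ and each $M_i$ omnioriented quasitoric, the proof is then an induction on $r$.

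For closure under negation, one alters the omniorientation: reversing the global orientation of an omnioriented quasitoric manifold $M$ and, where necessary, the orientations of its characteristic submanifolds (equivalently, sign-modifying the characteristic function) again yields an omnioriented quasitoric manifold over the same polytope, for which the tangential $T^n$-representation at each fixed point is the conjugate of the original. Since in Darby's setup the class in $\Zc_{2n}^U(T^n)$ is governed precisely by this fixed-point data, the new manifold represents $-[M]$; hence we may assume all $\varepsilon_i=+1$.

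Closure under addition is the substance of the argument and uses the generalized equivariant connected sum of Buchstaber--Ray \cite{BPR1}, developed for torus manifolds by Darby \cite{Darby}. Given omnioriented quasitoric manifolds $M_1,M_2$ of dimension $2n$ over simple polytopes $P_1,P_2$ with characteristic functions $\lambda_1,\lambda_2$, one chooses $T^n$-fixed points $p_1\in M_1$, $p_2\in M_2$ (vertices $v_1\in P_1$, $v_2\in P_2$), deletes invariant disc neighbourhoods, and glues equivariantly along the resulting boundary spheres. Combinatorially this is the connected sum $P_1\#P_2$ at $v_1,v_2$ with a characteristic function assembled from $\lambda_1$ and $\lambda_2$, so $M_1\#M_2$ is again quasitoric; and --- this is what the word ``generalized'' achieves --- the construction can be arranged so that the two omniorientations agree across the glued region, making $M_1\#M_2$ omnioriented. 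Since $M_1\#M_2$ is obtained from $M_1\sqcup M_2$ by attaching an equivariant $1$-handle, the two are $T^n$-equivariantly unitarily bordant, so $[M_1\#M_2]=[M_1]+[M_2]$ in $\Zc_{2n}^U(T^n)$. Applying this $r-1$ times to $M_1,\dots,M_r$ produces a single omnioriented quasitoric representative of $\alpha$, completing the induction.

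The step I expect to be the main obstacle is the matching of local data in the connected-sum construction: one must choose the fixed points $p_1,p_2$ --- possibly after replacing $M_1$ or $M_2$ by a bordant omnioriented quasitoric manifold, or after a $\mathrm{GL}(n,\Z)$-change of coordinates that has to be reconciled with the $\mathrm{GL}(n,\Z)$-action on $X(\Z^n)$ from \cite{BGV} --- so that the tangential representations and the omniorientation signs at $p_1$ and $p_2$ are compatible, and then check that $M_1\#M_2$ genuinely lands in the omnioriented quasitoric category rather than only among unitary toric manifolds. This is a combinatorial bookkeeping problem involving the characteristic pairs $(P,\lambda)$ and the finitely many omniorientations on each; once it is resolved, the bordism identity $[M_1\#M_2]=[M_1]+[M_2]$ and the induction are routine, and closure under negation falls out of the same sign analysis.
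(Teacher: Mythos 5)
Your proposal is correct and follows essentially the same route as the paper: reduce to the generators produced by Proposition~\ref{PROP1} and show that sums of classes with omnioriented quasitoric representatives again have such representatives, via the Buchstaber--Ray/Darby connected sum. The one substantive difference is how the addition step is closed. The obstacle you flag (matching tangential representations and omniorientation signs at the chosen fixed points, and verifying the geometric claim that $M_1\#M_2$ is obtained from $M_1\sqcup M_2$ by an equivariant $1$-handle and hence equivariantly unitarily bordant to it) is exactly what the paper avoids having to argue directly: it invokes the connected sum from the proof of \cite{BPR1}, Theorem 5.9, or simply \cite{Darby}, Proposition 8.9, which produces a quasitoric pair $(P,\lambda)$ whose omnioriented quasitoric manifold $M$ satisfies $\varphi_n^U(M)=\varphi_n^U(M_1)+\varphi_n^U(M_2)$ on the level of fixed-point data, and then concludes $[M]=[M_1]+[M_2]$ purely from the injectivity of $\varphi_n^U$ (Corollary~\ref{Cor-Darby}), with no direct bordism construction needed. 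So to complete your argument you should either replace the $1$-handle claim by this fixed-point-data-plus-injectivity argument, or carry out the local matching in detail; the references you cite contain the required bookkeeping. Your closure-under-negation step (reversing the orientation in the omniorientation flips every sign $\varepsilon(p)$, hence sends $\varphi_n^U([M])$ to its negative, so injectivity gives a representative of $-[M]$) is correct and is in fact a point the paper's proof leaves implicit, although it is needed since ``generated'' in Proposition~\ref{PROP1} allows negative integer coefficients; the paper also treats $n=1$ separately via \cite{Darby}, Corollary 8.8.
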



\section{Equivariant Bordism of 2-Torus Manifolds}
Following \cite{CF1}, let $\Rc_m(\Z_2^n)$ be the linear space over $\Z_2$, generated by the isomorphism classes of all $m$-dimensional real $\Z_2^n$-representations. The direct sum of representations defines a multiplication on  $\Rc_*(\Z_2^n) = \bigoplus_{m\geq 0} \Rc_m(\Z_2^n)$ and makes it into a graded commutative algebra over $\Z_2$, with unit the representation of degree 0. $\Rc_*(\Z_2^n)$ is called the {\em Conner–Floyd unoriented $\Z_2^n$-representation algebra}. As pointed out in \cite{L1}, $\Rc_*(\Z_2^n)$ is not the Grothendieck ring of $\Z_2^n$-representations. Every nontrivial irreducible real $\Z_2^n$-representation is 1-dimensional and  has the form $\tau_\rho: \Z_2^n\times \R \to \R $ with $\tau_\rho(g,x) = (-1)^{\rho(g)}$ for $\rho\in \text{Hom}(\Z_2^n,\Z_2)$, and $\tau_\rho$ is trivial if $\rho(g) = 0$ for all $g\in \Z_2^n$. Let $J_n^{\R} = \text{Hom}(\Z_2^n,\Z_2)\setminus \{0\}$ and regard it as the set of all nontrivial irreducible real $\Z_2^n$-representations, let $\Z_2[J_n^{\R}]$ denote the free polynomial algebra on $J_n^{\R}$ over $\Z_2$, then $\Rc_*(\Z_2^n)$ can be identified with $\Z_2[J_n^{\R}]$.

The essential relation between the equivariant unoriented bordism ring $\Zc_*(\Z_2^n) = \bigoplus_{m\geq 0} \Zc_m(\Z_2^n)$ and the Conner–Floyd unoriented $\Z_2^n$-representation algebra $\Rc_*(\Z_2^n)$ is characterized by the Stong \cite{Stong1}.

\begin{thm}[Stong]\label{Thm-Stong}
    The ring homomorphism 
        $$\varphi_*: \Zc_*(\Z_2^n) \to \Rc_*(\Z_2^n)$$
    defined by $[M] \mapsto \sum_{p\in M^{\Z_2^n}}[\tau_pM]$ is a monomorphism as algebras over $\Z_2$, where $\tau_pM$ denotes the real $\Z_2^n$-representation on the tangent space at the fixed point $p\in M^{\Z_2^n}$.
\end{thm}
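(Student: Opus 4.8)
The plan is to verify the three assertions packaged in the statement — that $\varphi_*$ is well defined on bordism classes, that it is a morphism of $\Z_2$-algebras with unit, and that it is injective — the injectivity being the only substantial point, which I would establish by equivariant surgery together with an induction on $n$.

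\emph{Well-definedness and multiplicativity.} Let $W$ be a compact $\Z_2^n$-manifold with $\partial W = M$. Because $M^{\Z_2^n}$ is finite, $W^{\Z_2^n}$ meets $\partial W$ in a finite set, so the union of its components meeting the boundary is a compact $1$-manifold with boundary $M^{\Z_2^n}$, i.e. a disjoint union of arcs (closed components of $W^{\Z_2^n}$ are irrelevant). Each arc joins two points $p,q \in M^{\Z_2^n}$, and the $\Z_2^n$-representation on the normal fibre of $W^{\Z_2^n}$ in $W$ is locally constant along it, so $\tau_p M \cong \tau_q M$. Hence in $\Rc_*(\Z_2^n) = \Z_2[J_n^{\R}]$ one gets $\sum_{p \in M^{\Z_2^n}} [\tau_p M] = \sum_{\text{arcs}} ([\tau_p M] + [\tau_q M]) = 0$, so $\varphi_*$ descends to $\Zc_*(\Z_2^n)$. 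Additivity and the normalization on the unit are immediate, and multiplicativity follows from $(M_1 \times M_2)^{\Z_2^n} = M_1^{\Z_2^n} \times M_2^{\Z_2^n}$ together with $\tau_{(p,q)}(M_1 \times M_2) \cong \tau_p M_1 \oplus \tau_q M_2$, since $\oplus$ induces the product on $\Rc_*(\Z_2^n)$.

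\emph{Injectivity, step 1: reduction to the fixed-point-free case.} Suppose $\varphi_*[M] = 0$. Since the fixed points are isolated, every $\tau_p M$ has no nonzero invariant vector (an invariant vector would exponentiate to a fixed arc through $p$), so $[\tau_p M]$ is a monomial in the generators $J_n^{\R}$ depending only on the isomorphism type of $\tau_p M$; as $\Z_2[J_n^{\R}]$ is free on monomials over $\Z_2$, the equation $\sum_p [\tau_p M] = 0$ means exactly that $M^{\Z_2^n}$ breaks into pairs $\{p,q\}$ with $\tau_p M \cong \tau_q M =: V$. For each such pair I would perform the equivariant surgery along $\{p,q\}$: take invariant tubular neighbourhoods $\cong D(V)$ of $p$ and of $q$, delete their interiors, and glue in $[-1,1] \times S(V)$ along $S^0 \times S(V)$; since $V^{\Z_2^n} = 0$ this creates no new fixed point, the action stays effective with finite fixed set, and the trace of the surgery is an equivariant bordism. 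Doing this for all pairs produces a fixed-point-free $2$-torus manifold $M'$ with $[M] = [M']$ in $\Zc_*(\Z_2^n)$, so it suffices to show that every fixed-point-free $\Z_2^n$-manifold is equivariantly null-bordant.

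\emph{Injectivity, step 2 — the main obstacle.} This I would prove by induction on $n$. If the action on $M'$ is free (in particular always when $n=1$), then $M'$ is the fibre product over $M'/\Z_2^n$ of the sphere bundles of real line bundles realizing its classifying data, and replacing one sphere bundle by its disk bundle exhibits $M'$ as the boundary of a $\Z_2^n$-manifold. In general $M'$ has nontrivial proper isotropy, and one stratifies by isotropy type: for each nontrivial $H \leq \Z_2^n$ the fixed submanifold $(M')^H$ is a closed $\Z_2^n/H$-manifold with empty $\Z_2^n$-fixed set, so $\varphi_*$ for the smaller group $\Z_2^n/H$ vanishes on it and the inductive hypothesis makes it bound equivariantly; one then has to splice these null-bordisms together with the free part, using the equivariant tubular neighbourhoods of the strata and a Mayer--Vietoris type argument, into a single $\Z_2^n$-manifold with boundary $M'$. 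Carrying out this stratum-by-stratum assembly rigorously — making the pieces fit into a genuine $\Z_2^n$-manifold while tracking that the cancellation hypothesis is inherited by every stratum — is where the real difficulty lies, and is essentially the content of Stong's argument in \cite{Stong1}; an alternative route to the injectivity runs through the localization theorem in $\Z_2^n$-equivariant bordism, exploiting that the Euler classes of the nontrivial irreducible representations are non-zero-divisors.
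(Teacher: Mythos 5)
The paper itself gives no proof of this statement -- it is quoted verbatim from Stong \cite{Stong1} -- so your attempt has to stand on its own, and it does not quite. The parts you actually carry out are correct: vanishing on boundaries via the arc components of $W^{\Z_2^n}$ with locally constant normal representation, additivity and multiplicativity from $(M_1\times M_2)^{\Z_2^n}=M_1^{\Z_2^n}\times M_2^{\Z_2^n}$ and $\tau_{(p,q)}=\tau_pM_1\oplus\tau_qM_2$, and the reduction of injectivity to the fixed-point-free case: since $\Rc_*(\Z_2^n)\cong\Z_2[J_n^{\R}]$ is free on monomials and each $\tau_pM$ has no trivial summand, $\varphi_*[M]=0$ forces the fixed points to pair off with isomorphic tangent representations, and the surgery along each pair is an allowed equivariant bordism creating no new fixed points and preserving effectiveness and finiteness of the fixed set.

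The genuine gap is your Step 2, which is where all the content of the theorem lives: the assertion that every closed $\Z_2^n$-manifold with empty fixed set of the full group (for $n\geq 2$ such an action need not be free) bounds equivariantly. The sketch you give -- stratify by isotropy, apply the inductive hypothesis to $(M')^H$ viewed as a $\Z_2^n/H$-manifold, then splice the null-bordisms -- is not a proof and has concrete defects as stated: the induced $\Z_2^n/H$-action on $(M')^H$ need not be effective (the kernel can even differ from component to component), so the inductive hypothesis, which concerns bordism of effective actions, does not directly apply; and, more seriously, bounding each stratum abstractly as a manifold with a smaller-torus action is far from sufficient -- to assemble a $\Z_2^n$-null-bordism of $M'$ one must bound the strata together with their $\Z_2^n$-equivariant normal bundle data, compatibly along the incidences of strata. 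Handling exactly this is the point of Stong's argument via the Conner--Floyd exact sequences for families of isotropy groups and bordism of classifying spaces (or, alternatively, tom Dieck's localization argument that you only name), and you explicitly defer to \cite{Stong1} at that step. So the proposal is an honest and correct reduction of the theorem to its key geometric fact, but it does not prove that fact, and as a proof of the stated theorem it is incomplete.
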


For each fixed point $p\in M^{\Z_2^n}$, the tangent $\Z_2^n$-representation $\tau_pM$ can be decomposed into a direct sum of 1-dimensional irreducible real representations $\tau_pM=\oplus_{j=1}^n \tau_{i,j}$, the set $\{[\tau_pM]\in \Rc_*(\Z_2^n)|p\in M^{\Z_2^n}\}$ is called the {\em fixed point data} of $M$. By Stong's theorem, the equivariant bordism class $[M]$ in $\Zc_m(\Z_2^n)$ is determined by its fixed point data. But which polynomials in $\Rc_*(\Z_2^n)$ are polynomial from fixed point datas? In \cite{tD1}, tom Dieck proved an existence theorem, showed that the elements in $\Rc_*(\Z_2^n)$ are determined by their equivariant characteristic numbers. In \cite{LT1}, L\"u and Tan gave a simple proof to this theorem, and formulated it into the following result in terms of Kosniowski and Stong’s localization formula.

\begin{thm}[tom Dieck-Kosniowski-Stong]\label{Thm-tD}
     Let $\{\tau_1,\dots,\tau_l\}$ be a collection of $m$-dimensional faithful $\Z_2^n$-representations in $\Rc_m(\Z_2^n)$. Then a necessary and sufficient condition that $\tau_1+\dots+\tau_l\in Im(\varphi_m)$ (or $\{\tau_1,\dots,\tau_l\}$ is the fixed point data of a $\Z_2^n$-manifold $M^m$) is that for all symmetric polynomial functions $f(x_1,\dots,x_m)$ over $\Z_2$,
        \begin{equation}\label{Equ-tD}
            \sum_{i=1}^l\frac{f(\tau_i)}{\chi^{\Z_2^n}(\tau_i)}\in H^*(B\Z_2^n;\Z_2) 
        \end{equation}

     \noindent where $\chi^{\Z_2^n}(\tau_i)$ denotes the equivariant Euler class of $\tau_i$, which is a product of $m$ nonzero elements of $H^1(B\Z_2^n; \Z_2)$, and $f(\tau_i)$ means that variables $x_1, \dots, x_m$ in the function $f(x_1, \dots, x_m)$ are replaced by those $m$ degree-one factors in $\chi^{\Z_2^n}(\tau_i)$.
\end{thm}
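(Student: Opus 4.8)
The plan is to prove the two implications of \eqref{Equ-tD} separately, the forward direction ``realizability $\Rightarrow$ \eqref{Equ-tD}'' coming from equivariant localization and the converse being the substantial part. For necessity, suppose $\{\tau_1,\dots,\tau_l\}$ is the fixed point data of a closed $\Z_2^n$-manifold $M^m$ with isolated fixed set $M^{\Z_2^n}=\{p_1,\dots,p_l\}$ and $\tau_i=\tau_{p_i}M$. Any symmetric polynomial $f(x_1,\dots,x_m)$ over $\Z_2$ determines, via the splitting principle, a stable equivariant characteristic class $f(TM)\in H^*_{\Z_2^n}(M;\Z_2)$ of the tangent bundle; integrating it over the closed manifold $M$ (pushing forward along the fibration $M\times_{\Z_2^n}E\Z_2^n\to B\Z_2^n$) gives the equivariant characteristic number $\langle f(TM),[M]\rangle_{\Z_2^n}$, which is an honest element of $H^*(B\Z_2^n;\Z_2)$. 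On the other hand, the mod $2$ Atiyah--Bott--Berline--Vergne localization formula, valid after inverting the finitely many nonzero elements of $H^1(B\Z_2^n;\Z_2)$, rewrites this pushforward as $\sum_{i=1}^l f(\tau_i)/\chi^{\Z_2^n}(\tau_i)$. Since the left-hand side is polynomial before any inversion, so is the right-hand side, which is exactly \eqref{Equ-tD}; the only thing needing care here is the mod $2$ localization theorem for $\Z_2^n$-actions, i.e.\ that $H^*_{\Z_2^n}(M;\Z_2)\to H^*_{\Z_2^n}(M^{\Z_2^n};\Z_2)$ becomes an isomorphism after inverting those Euler classes, which is standard.

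For sufficiency I would argue by induction, building a $\Z_2^n$-manifold with prescribed fixed point data. By Stong's Theorem~\ref{Thm-Stong} the bordism class is detected by $\varphi_m$, so it is enough to realize $\sum_i\tau_i$ as an element of $\mathrm{Im}(\varphi_m)$ and afterwards correct multiplicities by equivariant connected sums with linear spheres: the linear sphere $S(V\oplus\R)$ of a faithful representation $V$ is an $S^m$ with exactly two fixed points, each with tangent representation $V$, and it bounds the linear disc, so it realizes the relation $[V]+[V]=0$ directly. The richer building blocks are the real projectivizations $\R P(V)$ of faithful representations, their products with diagonal actions, and their equivariant connected sums; by the necessity direction already proved, the fixed point data of each such model satisfies \eqref{Equ-tD}. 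The problem then reduces to the algebraic statement that the $\Z_2$-subspace of $\Rc_m(\Z_2^n)$ cut out by the conditions \eqref{Equ-tD} is spanned by the fixed point data of these model manifolds.

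The crux, and the step I expect to be the main obstacle, is to extract from \eqref{Equ-tD} a model configuration genuinely contained in a given solution $\{\tau_1,\dots,\tau_l\}$, so that an induction on $l$ can run. The most informative instance is $f=1$, which forces $\sum_{i=1}^l \chi^{\Z_2^n}(\tau_i)^{-1}$ to lie in the polynomial ring even though each summand has degree $-m$; writing $\chi^{\Z_2^n}(\tau_i)=\prod_{j=1}^m\ell_{ij}$ with $\ell_{ij}$ nonzero in $H^1(B\Z_2^n;\Z_2)$ and comparing the most singular terms of a partial fraction decomposition, polynomiality forces the $\ell_{ij}$ to cancel in matched blocks, and I would read each such cancellation geometrically as a pair of fixed points joined by an invariant $\R P^1$ (adjacent vertices of the combinatorial datum of a projectivization), which is precisely the local configuration needed for an equivariant connected sum that strictly decreases $l$ while, by necessity applied to the excised model, preserving \eqref{Equ-tD}. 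An alternative to this block-cancellation analysis is an induction on $n$: restriction to a codimension-one subgroup $\Z_2^{n-1}\subset\Z_2^n$ turns \eqref{Equ-tD} into the corresponding condition over $\Z_2^{n-1}$ together with gluing data along the hyperplanes, and one reassembles a $\Z_2^n$-realization from $\Z_2^{n-1}$-realizations. In either route essentially all of the difficulty lies in the combinatorial bookkeeping of the equivariant Euler classes.
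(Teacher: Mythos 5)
Your necessity argument is fine and is the standard one: push forward the equivariant class $f(TM)$ along $M\times_{\Z_2^n}E\Z_2^n\to B\Z_2^n$, compare with the localization isomorphism obtained after inverting the nonzero elements of $H^1(B\Z_2^n;\Z_2)$, and conclude that the a priori polynomial left-hand side equals $\sum_i f(\tau_i)/\chi^{\Z_2^n}(\tau_i)$, forcing \eqref{Equ-tD}. Note for calibration that the paper itself gives no proof of Theorem~\ref{Thm-tD}: it is quoted from tom Dieck \cite{tD1} in the Kosniowski--Stong formulation of L\"u--Tan \cite{LT1}, so the comparison is with those sources.

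The sufficiency half, which is the real content, has a genuine gap: the step you yourself flag as ``the crux'' is never carried out, and as sketched it does not obviously work. Reducing to the claim that the $\Z_2$-subspace of $\Rc_m(\Z_2^n)$ cut out by \eqref{Equ-tD} is spanned by the fixed point data of projectivizations, products and spheres is a legitimate strategy, but the spanning statement is exactly the theorem and you give no mechanism to prove it. Concretely: (i) the instance $f=1$ is far too weak; polynomiality of $\sum_i\chi^{\Z_2^n}(\tau_i)^{-1}$ only matches the degree-one factors in pairs along hyperplanes and does not control the higher conditions, which is precisely where tom Dieck's integrality/characteristic-number input enters; (ii) the proposed geometric reading of a cancellation as ``a pair of fixed points joined by an invariant $\R P^1$'' has no justification at this stage, because there is no manifold yet, only a list of representations --- one must exhibit a model manifold whose entire fixed point datum can be subtracted from the given solution so that some complexity strictly decreases and the process terminates at $0$, and neither the existence of such a model inside an arbitrary solution nor termination is argued (subtracting a model's data does preserve \eqref{Equ-tD} by linearity, but that alone is circular); (iii) the alternative induction on $n$ is only named: the restriction of a faithful $\Z_2^n$-representation to a subgroup $\Z_2^{n-1}$ need not be faithful and the fixed set of the restricted action need not be finite, so the inductive hypothesis is not the same statement and the ``gluing data along hyperplanes'' is unspecified. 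The known proofs get around exactly this point by structural means --- tom Dieck via the Pontryagin--Thom construction and the localized equivariant bordism module over $\Nf_*$, L\"u--Tan via Stong's Theorem~\ref{Thm-Stong} combined with the Kosniowski--Stong localization formula --- and your sketch supplies no substitute for that input, so the hard direction remains unproved.
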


If ${\tau_1,\dots,\tau_l}$ is the fixed data of a $\Z_2^n$-manifold $M^m$, then the polynomial \ref{Equ-tD} is exactly an equivariant Stiefel–Whitney number of $M^m$. Although all elements of $Im(\varphi_*)$ can be characterized by the above localization formula, the ring structure of $\Zc_*(\Z_2^n)$ is still far from settled. It was showed by Conner and Floyd in [12] that $\Zc_*(\Z_2) \cong \Z_2$, and $\Zc_*(\Z_2^2) \cong \Z_2[u]$, where $u$ denotes the class of $\R P^2$ with the standard $(\Z_2)^2$-action. But for $n > 2$, the structures and generators of $\Zc_*(\Z_2^n)$ are still unknown. 

When $m=n$, the group $\Zc_n(\Z_2^n)$ is exactly formed by the classes of all $n$-dimensional 2-torus manifolds, and the graded ring is denoted by $\Zc_*^\R = \bigoplus\Zc_n(\Z_2^n)$. The restriction of $\varphi_*|_{\Zc_*^\R}: \Zc_*^\R \to \Rc_*(\Z_2^n)$ is still a monomorphism. Suppose $[M^m]\in  \Zc_m(\Z_2^n)$ and $p\in M^{\Z_2^n}$ is a fixed point, then the set $\{\tau_{p,1},\dots, \tau_{p,m}\}$ contains a basis of $J_n^{\R}$. If $m=n$ and $M^n$ is a 2-torus manifold, then the monomial $\tau_p = \tau_{p,1}\cdots\tau_{p,n}$ corresponds to a faithful representation, which means that $\{\tau_{p,1},\dots,\tau_{p,n}\}$ is a basis of $J_n^{\R}$.

\begin{defn}
A homogeneous polynomial $g = \sum_i \tau_{i,1}\cdots\tau_{i,m}$ of degree $m\leq n$ in $\Z_2[J_n^{\R}]$ is called an {\em essential $\Z_2^n$-polynomial} if for each $i$, the elements in $\{\tau_{i,1},\dots,\tau_{i,m}\}$ are linearly independent in $\text{Hom}(\Z_2^n,\Z_2)$. An essential $\Z_2^n$-polynomial of degree $n$ is also called {\em faithful}. 
\end{defn}


For a 2-torus manifold $M$, the image $\varphi_*([M]) = \sum_{p\in M^{\Z_2^n}}[\tau_pM]\in \Z_2[J_n^{\R}]$ is a faithful polynomial. The space $J_n^{\R}$ has a dual space $J_n^{*\R} = \text{Hom}(\Z_2, \Z_2^n)\setminus \{0\}$, and they are both isomorphic to $\Z_2^n$, the is defined by the following composition of homomorphisms:
$$<\cdot,\cdot>: J_n^{*\R}\times J_n^{\R} \to Hom(\Z_2,\Z_2)\cong \Z_2$$
$$<\xi,\rho>=\rho\circ \xi$$
Similarly, we can define essential and faithful $\Z_2^n$-polynomials in $\Z_2[J_n^{*\R}]$. If $g = \sum_i \tau_{i,1}\dots \tau_{i,n}$ is a faithful $\Z_2^n$-polynomial in $\Z_2[J_n^{\R}]$, then each basis $\{\tau_{i,1},\cdots, \tau_{i,n}\}$ has a dual basis $\{s_{i,1},\cdots, s_{i,n}\}$, and they determine a unique homogeneous polynomial $g^* = \sum_i s_{i,1}\dots s_{i,n}$ in $\Z_2[J_n^{*\R}]$, which is called the {\em dual $\Z_2^n$-polynomial} of $g$. It is obvious that $g$ is faithful in $\Z_2[J_n^{\R}]$ if and only if $g^*$ is faithful in $\Z_2[J_n^{*\R}]$.

In \cite{LT1}, L\"u and Tan defined a differential operator $d$ on the graded ring $\Z_2[J_n^{*\R}]$ as follows: for each monomial $\rho_1\dots\rho_k \in \Z_2[J_n^{*\R}]$ with $k\geq 1$, let
    \begin{equation}\label{Diff-1}
        d_k(\rho_1\dots \rho_k) = 
        \begin{cases}
        \sum_{i=1}^k \rho_1\dots \rho_{i-1} \hat{\rho_{i}} \rho_{i+1}\dots \rho_{k}  & \text{if}\  k>1 \vspace{2mm} \\  
        1 & \text{if}\ k=1
        \end{cases}
    \end{equation} 
and $d_0(1) = 0$, where the symbol $\hat{\rho_{i}}$ means that $\rho_i$ is deleted. It is easy to check that $d^2=0$, so it can be regarded as a boundary operator. For 2-torus manifolds, it was proved in \cite{LT1} that the existence Theorem \ref{Thm-tD} of tom Dieck is equivalent to the following.

\begin{thm}[L\"u-Tan]\label{Thm-LT}
 Let $g = \sum_i t_{i,1}\dots t_{i,n}$ be a faithful $\Z_2^n$-polynomial in $\Z_2[J_n^{\R}]$. Then $g\in Im\varphi_n$ if and only if $d_n(g^*) = 0$.
\end{thm}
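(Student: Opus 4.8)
The plan is to route both conditions through the same intermediate data: the family of ``hyperplane restrictions'' of $g$. I will show that ``$g\in Im\varphi_n$'' and ``$d_n(g^*)=0$'' are each equivalent to the vanishing of all of these restrictions. For a nonzero form $\ell\in J_n^{\R}$ put $H_\ell=\ker\ell\subseteq\Z_2^n$, a hyperplane with $H_\ell\cong\Z_2^{n-1}$; restriction of characters gives $\mathrm{Hom}(H_\ell,\Z_2)\cong\mathrm{Hom}(\Z_2^n,\Z_2)/\langle\ell\rangle$, and I write $\bar t$ for the image of a form $t$, and $J^{\R}(H_\ell)=\mathrm{Hom}(H_\ell,\Z_2)\setminus\{0\}$. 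An index $i$ in $g=\sum_i t_{i,1}\cdots t_{i,n}$ has $\ell\mid\chi^{\Z_2^n}(\tau_i)=t_{i,1}\cdots t_{i,n}$ exactly when $\ell=t_{i,j_0}$ for a unique $j_0=j_0(i)$; for such $i$ let $\bar\tau_i'=(\bar t_{i,j})_{j\neq j_0}$ and $\chi(\bar\tau_i')=\prod_{j\neq j_0}\bar t_{i,j}$. A short computation (using that $\{t_{i,1},\dots,t_{i,n}\}$ is a basis) shows $\bar\tau_i'$ is a basis of $\mathrm{Hom}(H_\ell,\Z_2)$, so $\chi(\bar\tau_i')$ is a faithful degree-$(n-1)$ monomial, and its dual basis inside $H_\ell$ is exactly the facet $\{s_{i,j}:j\neq j_0\}$. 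Set $g_\ell=\sum_{i:\,\ell\mid\chi^{\Z_2^n}(\tau_i)}\chi(\bar\tau_i')$, a faithful $\Z_2^{n-1}$-polynomial in $\Z_2[J^{\R}(H_\ell)]$.

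For the $d_n$-side: since the factors of each monomial of $g^*$ are linearly independent, every monomial $s_{i,1}\cdots\hat{s}_{i,j}\cdots s_{i,n}$ occurring in $d_n(g^*)$ is squarefree, and its factor set spans the hyperplane $\langle s_{i,k}:k\neq j\rangle=\ker t_{i,j}$. Grouping the monomials of $d_n(g^*)$ by this hyperplane therefore partitions them according to the value $\ell=t_{i,j}$, and by the dual-basis identification above the $\ell$-group sums to $(g_\ell)^*$; since monomials spanning distinct hyperplanes are distinct, we get $d_n(g^*)=\sum_{\ell\in J_n^{\R}}(g_\ell)^*$ with pairwise disjoint supports. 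As $P\mapsto P^*$ is a bijection on faithful polynomials, this yields $d_n(g^*)=0$ if and only if $g_\ell=0$ for every $\ell\in J_n^{\R}$.

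For the tom Dieck side I would invoke Theorem~\ref{Thm-tD}: $g\in Im\varphi_n$ iff $S_f:=\sum_i f(\tau_i)/\chi^{\Z_2^n}(\tau_i)\in H^*(B\Z_2^n;\Z_2)$ for every symmetric polynomial $f(x_1,\dots,x_n)$ over $\Z_2$. Each $\chi^{\Z_2^n}(\tau_i)$ is a squarefree product of linear forms, so for fixed $f$ the rational function $S_f$ has at worst simple poles, and only along the hyperplanes $\{\ell=0\}$, $\ell\in J_n^{\R}$; hence $S_f$ is a polynomial iff its residue along each such $\ell$ vanishes. Writing $\chi^{\Z_2^n}(\tau_i)=t_{i,j_0}\cdot\prod_{j\neq j_0}t_{i,j}$ and reducing modulo $\ell$ (using symmetry of $f$) shows that residue equals $\sum_{i:\,\ell\mid\chi^{\Z_2^n}(\tau_i)}h(\bar\tau_i')/\chi(\bar\tau_i')$ in the fraction field of $H^*(BH_\ell;\Z_2)$, where $h(y_1,\dots,y_{n-1})=f(0,y_1,\dots,y_{n-1})$. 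So tom Dieck's condition for $g$ is equivalent to: for all $\ell$ and all such $h$, $\sum_{i:\,\ell\mid\chi^{\Z_2^n}(\tau_i)}h(\bar\tau_i')/\chi(\bar\tau_i')=0$. Finally I would check this is equivalent to ``$g_\ell=0$ for all $\ell$'': if $g_\ell=0$, the squarefree monomials $\chi(\bar\tau_i')$ in the sum cancel in pairs, and equal monomials have the same factor set, hence the same value under any symmetric $h$, so the fractions cancel in pairs over $\Z_2$ and the sum is $0$; conversely, taking for $f$ the square of the $(n-1)$st elementary symmetric polynomial of $x_1,\dots,x_n$ (whose restriction to $x_1=0$ is $(x_2\cdots x_n)^2$) gives $h=(y_1\cdots y_{n-1})^2$ and $h(\bar\tau_i')/\chi(\bar\tau_i')=\chi(\bar\tau_i')$, so the residue sum along $\ell$ becomes exactly $g_\ell$, forcing $g_\ell=0$. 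Combining the two sides: $g\in Im\varphi_n\iff(\forall\ell)\,g_\ell=0\iff d_n(g^*)=0$.

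The main obstacle, as I see it, is the ``dictionary'' underlying these reductions: verifying cleanly that restricting the $n-1$ forms of $\tau_i$ other than $\ell$ to $H_\ell$ produces a basis of $\mathrm{Hom}(H_\ell,\Z_2)$ whose dual basis inside $H_\ell$ is the facet opposite $s_{i,j_0}$, and that the residue of $S_f$ along $\ell=0$ is literally the tom-Dieck-type localization sum over $H_\ell$ with $f$ replaced by $f|_{x_1=0}$. Once this correspondence is in place, both conditions of the theorem collapse onto the single combinatorial statement ``$g_\ell=0$ for all $\ell$'', and the remaining ingredients — that poles of a rational function with squarefree linear denominators occur only along those linear forms, the characteristic-$2$ pairing argument, and the single specialization $h=(y_1\cdots y_{n-1})^2$ — are routine.
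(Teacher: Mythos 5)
The paper itself gives no proof of this theorem --- it is quoted from [LT1] --- so I am judging your argument on its own terms. Most of your structure is sound: the decomposition $d_n(g^*)=\sum_{\ell}(g_\ell)^*$ with pairwise disjoint supports is correct, as is the residue reduction of the tom Dieck condition (simple poles only along the linear forms $\ell$, residue along $\ell$ equal to $\sum_{i:\ell\mid\chi(\tau_i)}h(\bar\tau_i')/\chi(\bar\tau_i')$ with $h=f(0,\cdot)$, and every symmetric $h$ in $n-1$ variables arising this way). Together with your pairing argument in characteristic $2$, this completely establishes the direction $d_n(g^*)=0\Rightarrow g\in \mathrm{Im}\,\varphi_n$, which is the hard (existence) half of the theorem.

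The gap is in the other direction, at the step ``taking $h=(y_1\cdots y_{n-1})^2$ gives $h(\bar\tau_i')/\chi(\bar\tau_i')=\chi(\bar\tau_i')$, so the residue sum becomes exactly $g_\ell$, forcing $g_\ell=0$.'' What that specialization gives you is the vanishing of the \emph{image} of $g_\ell$ under the evaluation map $\Z_2[J^{\R}(H_\ell)]\to H^*(BH_\ell;\Z_2)\cong\Z_2[y_1,\dots,y_{n-1}]$, i.e.\ a polynomial identity among products of linear forms; but what you need (to match your $d_n$-side, where $g_\ell=0$ means the monomials cancel in pairs as formal monomials) is vanishing of $g_\ell$ as an element of the free polynomial algebra on $J^{\R}(H_\ell)$. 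These are not equivalent once $n\geq 4$: already for $H_\ell\cong\Z_2^3$ there are $28$ unordered bases, while the space of cubic forms in three variables over $\Z_2$ is only $10$-dimensional, so the evaluated products of distinct bases satisfy nontrivial linear relations and $\mathrm{ev}(g_\ell)=0$ does not force $g_\ell=0$ formally. (For $n\leq 3$ the evaluation happens to be injective on the span of faithful monomials, which is why the step looks harmless in small cases.) To close the gap you must use the full strength of ``$\Sigma_\ell(h)=0$ for all symmetric $h$'', not one specialization: e.g.\ apply Theorem~\ref{Thm-tD} with $n$ replaced by $n-1$ to realize $g_\ell$ as $\varphi_{n-1}([N])$ for some $(n-1)$-dimensional $2$-torus manifold $N$, note that the sums $\Sigma_\ell(h)$ are then the equivariant Stiefel--Whitney numbers of $N$, and invoke tom Dieck's theorem that these numbers determine the class together with Stong's injectivity to get $g_\ell=\varphi_{n-1}([N])=0$ formally; alternatively, prove this direction geometrically as in [LT1], by pairing the fixed points of $M$ along the fixed circles of the corank-one subgroups $H_\ell$, which yields $d_n(g^*)=0$ directly.
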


Let $0\leq m\leq n-1$, let $\E_m(J_n^{*\R})$ be the set of all essential $\Z_2^n$-polynomials of degree $m+1$ in $\Z_2[J_n^{*\R}]$. If we regard an element in $J_n^{*\R}$ as a vertex, and an essential monomial $\tau_{i,1}\cdots\tau_{i,m}$ as a simplex of dimention $m-1$, then the graded polynomial ring $\E_*(J_n^{*\R}) = \bigoplus_{0\leq m\leq n-1} \E_m(J_n^{*\R})$ can be regarded as a chian complex with chain map $d_*$. This point of view is similar to the definition of {\em universal $\Z_2^n$-space} $X(\Z_2^n)$, which was introduced and used to characterized the small covers by Davis and Januszkiewicz in \cite{DJ}. 

\begin{defn}
    A subset $\{v_1,\dots,v_m\}$ of  $\Z_2^n$ is  {\em unimodular} if span$\{v_1,\dots,v_m\}$ is a direct summand of $\Z_2^n$ with dimension $m$. Note that a subset of a unimodular set is itself unimodular. The collection of all unimodular subsets of $\Z_2^n$ forms a simplicial complex $X(\Z_2^n)$ on unimodular vertices, which is called the {\em universal $\Z_2^n$-complex}.
\end{defn}

As showed in \cite{BGV}, the universal complex $X(\Z_2^n)$ is a pure simplicial complex and an $(n-1)$-connected Cohen-Macaulay space. It is the universal simplicial complex classifying small covers of dimention $n$. Actually, it can be used to classify the equivariant bordism of 2-torus manifolds as stated in Theorem \ref{THM1-1} and we now give a proof to that theorem.


\begin{proof}[{\bf Proof of Theorem~\ref{THM1-1}}]
    Let $0\leq m\leq n-1$. Let $r: J_n^{*\R}= Hom(\Z_2, \Z_2^n) \to \Z_2^n$ be the standard isomorphim. For an essential monomial $s_i^* = s_{i,1}\cdots s_{i,m} \in \E_{m}(J_n^{*\R})$,  the set $\{r(s_{i,1}),\dots,r(s_{i,m})\}$ is unimodular in $\Z_2^n$, it spans a simplex of dimention $m-1$ in the universal complex $X(\Z_2^n)$, denoted by $r(s_i^*)$. Let $(C_*(X(\Z_2^n)),\partial_*) = \sum_{0\leq m\leq n-1} (C_m(X(\Z_2^n),\partial_m)$ be the chain complex of $X(\Z_2^n)$, then the map $r$ induces a homomorphism 

$$r_m: \E_m(J_n^{*\R}) \to C_{m}(X(\Z_2^n)) $$

\noindent which maps an essential polynomial  $g^* = \sum s_i^*\in \E_m(J_n^{*\R})$ to a chain $r_m(g^*) = \sum r(s_i^*) \in C_m(X(\Z_2^n))$. It is easy to see that $r$ is an isomorphism of groups and the following is commutative:

$$
\xymatrix{
   \E_m(J_n^{*\R}) \ar[d]^{d_m} \ar[r]^{r_m}  & C_m(X(\Z_2^n)) \ar[d]^{\partial_{m}}            \\
   \E_{m-1}(J_n^{*\R}) \ar[r]^{r_{m-1}} & C_{m-1}(X(\Z_2^n))                       }
$$

\noindent where $d_m$ is the differential (\ref{Diff-1}) defined on $\Z_2[J_n^{*\R}]$, and $\partial_{m}$ is the differential on the chain complex $C_m(X(\Z_2^n))$. Regard $(\E_*(J_n^{*\R}), d)$ as a chian complex, then the above homomorphism $r$ is a chain isomorphism, it induces an isomorphism of homology
$$r_{m*}: H_m(\E_*(J_n^{*\R})) \cong H_m(X(\Z_2^n))$$ 

Let $g$ be a faithful $\Z_2^n$-polynomial in $\Z_2[J_n^{\R}]$, then $g^*$ is a faithful $\Z_2^n$-polynomial in $\Z_2[J_n^{*\R}]$. By Theorem \ref{Thm-Stong}, the equivariant bordism group $\Zc_n(\Z_2^n)$ of 2-torus manifolds is isomorphic to the image $Im\varphi_n$. By Theorem \ref{Thm-LT}, $g\in Im\varphi_n$ if and only if $d_n(g^*) = 0$, that is, $g^*$ is a closed chain in $\E_*(J_n^{*\R})$, it represents a homology class $[g^*]\in H_{n-1}(\E_*(J_n^{*\R}))\cong H_{n-1}(X(\Z_2^n))$, so we have
$$\Zc_n(\Z_2^n) \cong Im\varphi_n \cong H_{n-1}(\E_*(J_n^{*\R});\Z_2) \cong H_{n-1}(X(\Z_2^n);\Z_2)$$
\end{proof}

The homotopy of the universal complex $X(\Z_2^n)$ was determined in \cite{BGV}, we recall their result to calculate the dimension of $\Zc_n(\Z_2^n)$, the proof is straightforward.

\begin{thm}\cite{BGV}\label{Thm-BGV}
The simplicial complex $X(\Z_2^n)$ is homotopy equivalent to the wedge of $A_n$ spheres $S^{n-1}$, where 
 $$A_n = (-1)^n + \sum_{i=0}^{n-1}(-1)^{n-1-i}\frac{(2^{n-1}-2^i)\cdots(2^n-2^0)}{(i+1)!}$$  
\end{thm}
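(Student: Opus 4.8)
The plan is to recognise $X(\Z_2^n)$ as the independence complex of a matroid and then combine the (known) homotopy type of such complexes with an Euler-characteristic count. First I would note that over the field $\Z_2$ a finite subset of $\Z_2^n$ is unimodular exactly when it is linearly independent; hence the vertex set of $X(\Z_2^n)$ is $\Z_2^n\setminus\{0\}$ (which has $2^n-1$ elements) and the faces are precisely the linearly independent subsets. In other words $X(\Z_2^n)$ is the matroid (independence) complex of the rank-$n$ matroid represented over $\Z_2$ by the list of all nonzero vectors of $\Z_2^n$; in particular it is pure of dimension $n-1$, its facets being the unordered bases of $\Z_2^n$, in agreement with the statement recalled in the excerpt that $X(\Z_2^n)$ is pure and Cohen--Macaulay.

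Second, I would invoke the homotopy structure of matroid complexes. By Provan--Billera (see also Bj\"orner), every matroid independence complex is shellable, and a pure shellable complex of dimension $d$ is homotopy equivalent to a wedge of $d$-spheres whose number is the rank of $\widetilde H_d$; in particular all lower reduced homology vanishes and $\widetilde H_{n-1}$ is free. Alternatively one may use the Cohen--Macaulay property already granted above: it forces $\widetilde H_i(X(\Z_2^n);\Z)=0$ for $i<n-1$ with $\widetilde H_{n-1}$ free, and, together with simple connectedness for $n\ge 3$ (again a consequence of shellability, a shellable complex of dimension $\ge 2$ being simply connected), a standard Hurewicz/Whitehead argument produces a homotopy equivalence $X(\Z_2^n)\simeq\bigvee S^{n-1}$. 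The small cases $n=1$ (a single vertex, $A_1=0$) and $n=2$ (the boundary of a triangle, $A_2=1$) are checked directly.

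Third, I would count spheres. Since the lower reduced homology vanishes, the number of spheres equals $(-1)^{n-1}\widetilde\chi(X(\Z_2^n))$. To compute $\widetilde\chi$, observe that an $i$-element independent subset corresponds to $i!$ ordered independent $i$-tuples, and the number of the latter is $(2^n-1)(2^n-2)(2^n-4)\cdots(2^n-2^{i-1})=\prod_{j=0}^{i-1}(2^n-2^j)$, since the $k$-th vector must avoid the $2^{k-1}$-element span of the previous ones. Hence the number of $(i-1)$-simplices is $\frac{1}{i!}\prod_{j=0}^{i-1}(2^n-2^j)$ for $1\le i\le n$, and
\[
\widetilde\chi\bigl(X(\Z_2^n)\bigr)\;=\;-1+\sum_{i=1}^{n}(-1)^{i-1}\frac{1}{i!}\prod_{j=0}^{i-1}(2^n-2^j).
\]
Multiplying by $(-1)^{n-1}$ and re-indexing $i\mapsto i+1$ yields exactly the stated value $A_n$ (in the closed form appearing in Corollary~\ref{COR1-1}).

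The main obstacle is the homotopy-type input of the second step; everything else is bookkeeping once it is available. If one is willing to cite \cite{BGV} or the matroid literature this is immediate, but a self-contained proof must establish that the reduced homology of $X(\Z_2^n)$ is concentrated in the top degree and free. The natural approach is induction on $n$ by deletion and contraction of a vertex $v$, and the delicate point is that the link of $v$ in $X(\Z_2^n)$ is \emph{not} isomorphic to $X(\Z_2^{n-1})$, because the quotient map $\Z_2^n\to\Z_2^n/\langle v\rangle$ is two-to-one on nonzero vectors; so the inductive step requires a careful Mayer--Vietoris (or long-exact-sequence-of-a-pair) analysis of the link rather than a naive identification. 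Once the wedge-of-spheres structure and the vanishing of lower homology are in hand, the face count of the previous paragraph completes the proof.
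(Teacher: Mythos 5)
Your argument is correct, and there is nothing in the paper to compare it against: Theorem \ref{Thm-BGV} is simply quoted from \cite{BGV} without proof, so your write-up supplies an argument the paper itself omits. Your route — observe that over the field $\Z_2$ unimodularity is just linear independence, so $X(\Z_2^n)$ is the independence complex of the rank-$n$ matroid of all $2^n-1$ nonzero vectors; invoke shellability of matroid complexes (Provan--Billera, Bj\"orner) to get a homotopy wedge of $(n-1)$-spheres; and count the spheres via the reduced Euler characteristic and the count $\frac{1}{i!}\prod_{j=0}^{i-1}(2^n-2^j)$ of $(i-1)$-simplices — is the standard argument for the $\Z_2$ (and $\Z_p$) case and is in the spirit of the cited source. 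The deletion--contraction worry in your last paragraph is unnecessary once shellability is cited, since Bj\"orner's theorem already yields the wedge decomposition directly, with no separate Hurewicz/Whitehead step needed. One point worth flagging: your computation gives
$$A_n = (-1)^n + \sum_{i=0}^{n-1}(-1)^{n-1-i}\frac{(2^{n}-2^i)\cdots(2^n-2^0)}{(i+1)!},$$
i.e.\ the form appearing in Corollary \ref{COR1-1}; the factor $(2^{n-1}-2^i)$ printed in the statement of Theorem \ref{Thm-BGV} is a misprint, since it would give $A_2=0$ whereas $X(\Z_2^2)$ is the boundary of a triangle, hence $S^1$ and $A_2=1$, exactly as your formula predicts (and $A_3=13$ matches $\dim_{\Z_2}\Zc_3(\Z_2^3)$ from \cite{L1}). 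So your derivation in fact corrects the stated formula rather than merely reproducing it.
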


\begin{proof}[{\bf Proof of Corollary~\ref{COR1-1}}]
    The proof is straightforward by Theorem \ref{THM1-1} and Theorem \ref{Thm-BGV}.
\end{proof}


It was proved in \cite{LT1} that the generators of $\Zc_n(\Z_2^n)$ can be chosen to be small covers over products of simplices. Small covers are important models for $\Z_2^n$-manifolds, which are defined and studied by Davis and Januszkiewicz in \cite{DJ}, they are topological versions of real toric varieties. We recall their definitions and basic properties.

\begin{defn}
    An $n$-dimensional {\em small cover} $\pi: M^{n} \to P^n$ is a smooth closed $n$-dimensional manifold $M^n$ with a locally standard $\Z_2^n$-action such that its orbit space is homeomorphic to a $n$-dimensional simple convex polytope $P^n$, here locally standard means that the action is locally isomorphic to a faithful representation of $\Z_2^n$ on $\R^n$. 
\end{defn}

Each small cover $\pi: M^n\to P^n$ determines a characteristic function $\lambda:\F(P^n)\to J_n^{*\R}$, where $\F(P^n)$ consists of all facets (i.e., $(n-1)$-dimensional faces) of $P^n$. Given a vertex $v$ of $P^n$, since $P$ is simple, there are $n$ facets $F_1, \dots, F_n$ in $\F(P^n)$ such that $v = F_1 \cap\dots\cap F_n$, characteristic function has the property that the elements $\{\lambda(F_1),\dots,\lambda(F_n)\}$ are linearly independent, so their product $\lambda_v = \lambda(F_1)\cdots\lambda(F_n)$ is essential, and the polynomial $f^*_{\lambda}(P) = \sum_{v\in V(P)}\lambda_v$ is a faithful polynomial of degree $n$ in $\Z_2[J_n^{*\R}]$. We call the map $\lambda$ a $J_n^{*\R}$-coloring of $P^n$ here, and the $f^*_{\lambda}(P)$ is called the {\em $J_n^{*\R}$-coloring polynomial } of $(P^n,\lambda)$.

For a small cover $\pi: M^{n} \to P^n$, the image $f(P^n,\lambda) = \varphi_n([M])$ under the Stong homomorphism is a faithful polynomial in $\Z_2[J_n^{\R}]$, and its dual is just the $J_n^{*\R}$-coloring polynomial $f^*_{\lambda}(P)$. We recall the result about the generators of $\Zc_n(\Z_2^n)$, and give a new proof to it.

\begin{proof}[{\bf Proof of Corollary \ref{COR2-1}}]
    By Theorem \ref{Thm-BGV}, the universal complex $X(\Z_2^n)$ is homotopy to the wedge of $A_n$ spheres $S^{n-1}$, we call such spheres {\em basic spheres} of $X(\Z_2^n)$. By Theorem \ref{THM1-1}, each equivariant bordism class of $\Zc_n(\Z_2^n)$ is corresponding to an element in $H_{n-1}(X(\Z_2^n);\Z_2)$, which are generalized by the homology class of basic spheres. Since $X(\Z_2^n)$ is pure, the dual of a basic sphere $S^{n-1}$ is the boundary $\partial{P^n}$ of a simple polytope $P^n$, and the unimodular subsets of $S^{n-1}$ induce a $J_n^{*\R}$-coloring $\lambda$ on $P^n$. By Davis–Januszkiewicz's theory of small covers, the pair $(P^n,\lambda)$ determines a unique small cover $\pi:M^n\to P^n$ such that $\lambda$ is the characteristic function. So the equivariant bordism class corresponding to the baisc sphere $S^{n-1}$ is represented by a small cover $M$. 
\end{proof}

\section{Equivariant Bordism of Unitary Torus Manifolds}
A {\em unitary manifold} $M$ is a smooth manifold endowed with a complex structure on the stable tangent bundle of $M$. If the complex structure is given on the tangent bundle $TM$ of $M$, M is called an {\em almost complex manifold}. Let $G$ be a compact Lie group, if the action of $G$ preserves the tangential stably complex structure of unitary manifold $M$, then $M$ is called a {\em unitary $G$-manifold}.

Let $\Omega^{U,G}$ denote the equivariant unitary bordism ring of unitary $G$-manifolds. There have been many different ways to study the equivariant unitary bordism, however, the explicit ring structure of $\Omega^{U,G}$ is still difficult to calculate, only partial results are known. In \cite{tD1}, tom Dieck gave a commutative diagram
$$
\begin{tikzcd}[column sep=scriptsize, row sep=scriptsize]
     \Omega_*^{U,G} \arrow[r, "\varphi_{\Omega}"] \arrow[d] & MU_*[e_V,e_V^{-1},Y_{V,d}] \arrow[d] \\
     MU_*^{G} \arrow[r]  & MU_*[e_V, e_V^{-1}, Y_{V,d}]
\end{tikzcd}
$$

\noindent where $MU_*^{T^n}$ is the homotopy theoretic equivariant unitary bordism ring, and $MU_*$ is the ordinary homotopy theoretic unitary bordism ring, $e_V$ are the Euler classes of nontrivial irreducible complex $T^n$-representations V, $Y_{V,d}$ are the classes of degree $2d$ $(2 \leq d \leq \infty)$ represented by the $T^n$-bundle $E\otimes V \to \C P^{d-1}$ with $E\to \C P^{d-1}$ being the hyperplane line bundle. When $G = T^n$ is the compact torus of rank $n$, Hanke \cite{Hanke} showed that the image $\text{Im}(\varphi_{\Omega})\subset MU_*[e_V^{-1},Y_{V,d}]$, and proved that the following

$$
\begin{tikzcd}[column sep=scriptsize, row sep=scriptsize]
     \Omega_*^{U,T} \arrow[r, "\varphi_{\Omega}"] \arrow[d] & MU_*[e_V^{-1},Y_{V,d}] \arrow[d] \\
     MU_*^{T} \arrow[r]  & MU_*[e_V, e_V^{-1}, Y_{V,d}]
\end{tikzcd}
$$

\noindent is a pullback square and  has all maps injective.

For a unitary $T^n$-manifold $M$, each component of the fixed point set $M^T$ is again a unitary $T$-manifold. If $M$ has an isolated fixed point, then $\text{dim}M$ is even and the tangent space $T_pM$ is a complex $T^n$-representation. Let $\Zc_{*}^U(T^n) = \bigoplus_{m\geq 0}\Zc_{m}^U(T^n)$ denote the subring of $\Omega_*^{U,T}$ consists of all classes that can be represented by unitary $T^n$-manifolds with finite fixed point set. In his Ph.D’s thesis \cite{Darby}, Darby refined Hanke's result to the following pull back square with all maps injective.

$$
\begin{tikzcd}[column sep=scriptsize, row sep=scriptsize]
     \Zc_{*}^U(T^n) \arrow[r, "\varphi_{\Omega}"] \arrow[d] & MU_*[e_V^{-1}] \arrow[d] \\
     MU_*^{T} \arrow[r]  & MU_*[e_V, e_V^{-1}, Y_{V,d}]
\end{tikzcd}
$$

Let $M^{2m}$ be a unitary $T^n$-manifold with finite fixed point set, for each isolated fixed point $p\in M^T$, there are two orientations on the tangent space $T_pM$: the one induced from the orientation of $M$ and the other induced from the complex structure on $T_pM$. Define the sign $\varepsilon(p)$ of $p$
    \begin{equation}\label{Equ-1}
        \varepsilon(p) = 
        \begin{cases}
        +1  & \text{if these two orientations coincide}\vspace{2mm} \\  
        -1 & \text{otherwise}
        \end{cases}
    \end{equation} 
The tangent space $T_pM$ is a complex $T^n$-representation and can be decomposed into a direct sum of irreducible representations $\tau_pM = \tau_{p,1}\oplus\dots\oplus\tau_{p,m}$. Let $J_n^{\C} = \text{Hom}(T^n,S^1)\setminus \{0\}$ and regard it as the set of all irreducible complex $T^n$-representations. Let $\Z[J_n^{\C}]$ be the polynomial algebras on $J_n^{\C}$ over $\Z$, each element in $J_n^{\C}$ is of degree 2. The set $\{[\tau_pM]\in\Z[J_n^\C]|p\in M^T\}$ is called {\em the fixed point data}. The homomorphism $\varphi_{\Omega}$ restrict to $\Zc_{*}^U(T^n)$ can be written as
$$\varphi_{\Omega}: \Zc_*^U(T^n) \to \Z[J_n^\C]$$
$$[M] \mapsto \sum_{p\in M^{T^n}}\varepsilon(p)\prod_{i=1}^m\tau_{p,i}$$

\noindent By Darby's pullback square above, the $\varphi_{\Omega}$ is injective. To compare with Stong's Theorem \ref{Thm-Stong} in the 2-torus case, we restate the Darby's result in the following theorem.

\begin{thm}[\cite{Darby}]\label{Thm-Darby}
    The ring homomorphism $\varphi_{\Omega}: \Zc_*^U(T^n) \to \Z[J_n^\C]$ is a monomorphism as algebras over $\Z$.
\end{thm}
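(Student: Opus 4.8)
The statement is a translation, into the combinatorial model $\Z[J_n^\C]$, of the injectivity already built into Darby's pullback square, so the plan is to exhibit $\varphi_\Omega\colon\Zc_*^U(T^n)\to\Z[J_n^\C]$ as a factor of the map $\varphi_\Omega\colon\Zc_*^U(T^n)\to MU_*[e_V^{-1}]$ from that square and then invoke the elementary fact that injectivity of a composite forces injectivity of its first factor. First I would construct the bridging homomorphism: since $\Z[J_n^\C]$ is the free graded-commutative polynomial $\Z$-algebra on the set $J_n^\C$ with every generator in degree $2$, the assignment $V\mapsto e_V^{-1}$, where $e_V$ is the equivariant Euler class of the irreducible representation $V$, extends uniquely to a graded $\Z$-algebra homomorphism
$$\psi\colon \Z[J_n^\C]\longrightarrow MU_*[e_V^{-1}],$$
the only bookkeeping point being that, in the grading conventions of the preceding diagrams, $e_V^{-1}$ has degree $2$, matching that of the generator $V\in J_n^\C$.

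The heart of the argument is to identify the composite of $\varphi_\Omega\colon\Zc_*^U(T^n)\to\Z[J_n^\C]$ with $\psi$ as Darby's map $\varphi_\Omega\colon\Zc_*^U(T^n)\to MU_*[e_V^{-1}]$. For a unitary $T^n$-manifold $M^{2m}$ with isolated fixed points, the normal bundle of each fixed point $p$ is the tangent representation $T_pM=\bigoplus_{i=1}^m\tau_{p,i}$, and since the corresponding fixed submanifold is a single point its bordism class is $[\mathrm{pt}]=1\in MU_0$ and carries no $Y_{V,d}$-contribution; the localization (fixed point) formula underlying tom Dieck's map therefore evaluates to $\varphi_\Omega([M])=\sum_{p\in M^{T^n}}\varepsilon(p)\prod_{i=1}^m e_{\tau_{p,i}}^{-1}$, where the sign $\varepsilon(p)$ records the discrepancy between the orientation of $M$ and the complex orientation of $T_pM$. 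Applying $\psi$ term by term to the defining formula $[M]\mapsto\sum_p\varepsilon(p)\prod_i\tau_{p,i}$ reproduces precisely this expression, so the triangle commutes; as a by-product this reconfirms that $[M]\mapsto\sum_p\varepsilon(p)\prod_i\tau_{p,i}$ is a well-defined homomorphism of rings, multiplicativity following also from the description of the fixed set of a product $M_1\times M_2$. Since Darby's $\varphi_\Omega\colon\Zc_*^U(T^n)\to MU_*[e_V^{-1}]$ is injective --- all maps in his pullback square are --- and equals $\psi$ composed with the $\Z[J_n^\C]$-valued $\varphi_\Omega$, the latter is injective, hence a monomorphism of $\Z$-algebras.

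The step I expect to demand the most care is the precise form of tom Dieck's map on manifolds with isolated fixed points, in particular making the sign $\varepsilon(p)$ emerge correctly from the mismatch between the stably complex orientation of $M$ and the genuine complex orientation of the $T^n$-representation $T_pM$ at $p$; the remaining ingredients --- existence of $\psi$, the degree bookkeeping, and the diagram chase --- are formal. An alternative would be to reprove the theorem from scratch by adapting Stong's argument together with the Kosniowski--Stong localization formula, as in the $2$-torus case of Theorem~\ref{Thm-Stong}; but since that is essentially Darby's original route, factoring the map through his pullback square is the economical option.
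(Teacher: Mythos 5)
Your proposal is correct and follows essentially the same route as the paper, which treats this theorem as a restatement of Darby's result and deduces injectivity directly from his pullback square, i.e.\ from the injectivity of $\varphi_{\Omega}\colon \Zc_*^U(T^n)\to MU_*[e_V^{-1}]$ evaluated on manifolds with isolated fixed points. Your added detail---constructing $\psi\colon \Z[J_n^\C]\to MU_*[e_V^{-1}]$ via $V\mapsto e_V^{-1}$ and checking the factorization through the localization formula $\sum_p \varepsilon(p)\prod_i e_{\tau_{p,i}}^{-1}$---is exactly the content implicit in the paper's one-line justification, so there is no substantive difference in approach.
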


Unitary toric manifolds was introduced by Masuda in \cite{Masuda}, which is a special case of unitary $T^n$-manifolds. The equivariant bordism group $\Zc_n^U(T^n)$ is formed by the classes of all $2n$-dimensional unitary toric manifolds, and the graded ring is $\Zc_*^\C = \bigoplus_{n\geq 0}\Zc_n(T^n)$. To eliminate the sign $\varepsilon(p)$ of a fixed point $p$ and absorb the effect of orientations in the image $\varphi_{\Omega}([M])$, consider the exterior algebra $\Lambda_{\Z}^*(J_n^{\C})$ as in Darby \cite{Darby}. 

\begin{defn}
    The {\em exterior algebra} $\Lambda_{\Z}^*(J_n^{\C})$ is the quotient algebra $\Lambda_{\Z}^*(J_n^{\C}): = \Z[J_n^\C]/I$, where $I$ is the ideal generated by all elements of the form $\tau^2$ and expressions of the form $\tau_1\tau_2+\tau_2\tau_1$. The exterior product is $\tau_1\wedge \tau_2 = \tau_1\tau_2 \text{ mod }I$, and $\Lambda_{\Z}^*(J_n^{\C}) =\bigoplus_{k\geq 0}\Lambda_{\Z}^k(J_n^{\C})$ is a graded algebra such that $\Lambda_\Z^k[J_n^{\C}]\wedge \Lambda_\Z^l[J_n^{\C}] \subset \Lambda_\Z^{k+l}[J_n^{\C}]$.
\end{defn}

Suppose $M^{2n}$ is a unitary torus manifold and $p\in M^T$ is a fixed point, the tangent $T^n$-representaion at $p$ can be decomposed as a product of irreducible $T^n$-representations $\tau_p = \tau_{p,1}\dots\tau_{p,n}$. By \cite{Masuda} Lemma 1.3 (1), the set $\{\tau_{p,1},\dots, \tau_{p,n}\}$ is a basis of $J_n^{\C}$. For each fixed poit $p$, rearrange the basis $\{\tau_{p,1},\dots, \tau_{p,n}\}$ if necessary such that 
$$\det[\tau_{p,1} \dots \tau_{p,n}] = \varepsilon(p)$$

\noindent where the $n\times n$ integral matrix $[\tau_{p,1} \dots \tau_{p,n}]$ is formed by taking the ith column to be the vector $\tau_{p,n}\in J_n^{\C}\cong \Z^n$. Then $\varphi_{\Omega}$ induces a homomorphism $\varphi_*^U: \Zc_*^\C \to \Lambda_{\Z}^*(J_n^{\C})$ such that 
$$\varphi_*^U([M]) =  \sum_{p\in M^{T^n}}\tau_{p,1}\wedge\dots\wedge\tau_{p,n}$$
By Theorem \ref{Thm-Darby}, the following result is straightforward.
\begin{cor}[\cite{Darby}]\label{Cor-Darby}
    The ring homomorphism $\varphi_*^U: \Zc_*^\C \to \Lambda_{\Z}^*(J_n^{\C})$ is a monomorphism as algebras over $\Z$.
\end{cor}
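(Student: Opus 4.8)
The plan is to deduce the injectivity of $\varphi_*^U$ directly from the injectivity of $\varphi_{\Omega}$ on $\Zc_*^\C \subset \Zc_*^U(T^n)$ established in Theorem \ref{Thm-Darby}. The key point is that, although passing from $\Z[J_n^\C]$ to the exterior algebra $\Lambda_\Z^*(J_n^\C)$ collapses information in general, no information relevant to \emph{faithful} polynomials is lost: for a unitary torus manifold $M^{2n}$, each monomial $\varepsilon(p)\prod_{i=1}^n \tau_{p,i}$ in $\varphi_{\Omega}([M])$ already has all $n$ factors $\{\tau_{p,1},\dots,\tau_{p,n}\}$ forming a basis of $J_n^\C$ (by \cite{Masuda} Lemma 1.3(1)), hence pairwise distinct; so $\tau_{p,1}\wedge\dots\wedge\tau_{p,n}$ is a nonzero element of $\Lambda_\Z^n(J_n^\C)$. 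First I would make precise the factorization $\varphi_*^U = \Phi \circ \varphi_{\Omega}$, where $\Phi\colon \Z[J_n^\C] \to \Lambda_\Z^*(J_n^\C)$ is the quotient map: this is exactly the content of the normalization $\det[\tau_{p,1}\,\dots\,\tau_{p,n}] = \varepsilon(p)$ chosen before the statement, since reordering the factors of a wedge product of $n$ distinct vectors introduces precisely the sign of the permutation, and this sign is absorbed into the determinant so that $\Phi\bigl(\varepsilon(p)\prod_i \tau_{p,i}\bigr) = \tau_{p,1}\wedge\dots\wedge\tau_{p,n}$ with the chosen ordering.

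Next I would check that $\Phi$ is a ring homomorphism, which is immediate from the definition of the exterior algebra as the quotient $\Z[J_n^\C]/I$; and that $\varphi_{\Omega}$ restricted to $\Zc_*^\C$ lands in the subalgebra of $\Z[J_n^\C]$ spanned by faithful polynomials, which follows from the remark preceding the corollary. The crux is then injectivity: suppose $\varphi_*^U([M]) = 0$ in $\Lambda_\Z^n(J_n^\C)$. I want to conclude $\varphi_{\Omega}([M]) = 0$ in $\Z[J_n^\C]$, whence $[M]=0$ by Theorem \ref{Thm-Darby}. Write $\varphi_{\Omega}([M]) = \sum_p \varepsilon(p)\,\tau_{p,1}\cdots\tau_{p,n}$ with each $\{\tau_{p,i}\}$ a basis. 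Group the fixed points by the \emph{set} $\{\tau_{p,1},\dots,\tau_{p,n}\}$ they determine. For a fixed such basis $B$, the monomials $\prod_{i}\tau_{p,i}$ over all $p$ with $\{\tau_{p,i}\}=B$ are all equal as commutative monomials in $\Z[J_n^\C]$, and all map to $\pm\,(e_1\wedge\dots\wedge e_n)$ with sign $+1$ under $\Phi$ by the determinant normalization; distinct bases $B$ give $\Z$-linearly independent elements of $\Lambda_\Z^n(J_n^\C)$ (the degree-$n$ wedges of distinct basis $n$-subsets form part of a $\Z$-basis). Hence the coefficient of the wedge monomial associated to $B$ in $\varphi_*^U([M])$ equals the sum, over those $p$, of the integers $+1$, i.e. it counts $|\{p : \{\tau_{p,i}\}=B\}|$; but wait — I must instead track it in $\Z[J_n^\C]$ directly: the coefficient of the commutative monomial $\prod_{e\in B} e$ in $\varphi_{\Omega}([M])$ is $\sum_{p:\{\tau_{p,i}\}=B}\varepsilon(p)\cdot(\pm 1)$, and after the determinant normalization each term is $+1$, so this coefficient likewise equals $|\{p:\{\tau_{p,i}\}=B\}|$, the same non-negative integer. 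Since $\Phi$ simply relabels each monomial $\prod_{e\in B}e$ as $e_{i_1}\wedge\dots\wedge e_{i_n}$ (up to the sign absorbed into $\varepsilon$), $\varphi_*^U([M])=0$ forces this integer to vanish for every $B$, hence $\varphi_{\Omega}([M])=0$.

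The main obstacle, and the place where care is genuinely needed, is the bookkeeping of signs and the claim that $\Phi$ induces an \emph{injection} on the span of faithful polynomials — equivalently, that no cancellation can occur in $\Lambda_\Z^*(J_n^\C)$ among the images of a faithful polynomial that did not already occur in $\Z[J_n^\C]$. This rests on two facts to be verified explicitly: (i) for each fixed point the normalization can indeed be achieved, i.e. one may always reorder a basis so that its determinant equals the prescribed sign $\varepsilon(p)\in\{\pm1\}$ — true because transposing two columns flips the determinant's sign and the basis vectors are distinct; and (ii) the elements $e_{i_1}\wedge\dots\wedge e_{i_n}$, ranging over $n$-element subsets of $J_n^\C$ that span $\Z^n$, are part of a $\Z$-basis of $\Lambda_\Z^n(J_n^\C)$, so their coefficients can be read off independently. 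Both are routine once stated, but they are exactly what makes the reduction to Theorem \ref{Thm-Darby} valid, so I would spell them out rather than leave them implicit.
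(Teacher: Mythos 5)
Your overall strategy is the same as the paper's: the paper obtains this corollary directly from Theorem \ref{Thm-Darby}, precisely by the observation that for faithful fixed point data nothing is lost in passing from $\Z[J_n^{\C}]$ to $\Lambda_{\Z}^*(J_n^{\C})$. However, one step of your sign bookkeeping is false as stated. You claim that ``after the determinant normalization each term is $+1$'', so that the coefficient of $\prod_{e\in B}e$ in $\varphi_{\Omega}([M])$, and likewise the coefficient of the corresponding wedge monomial in $\varphi_*^U([M])$, equals the non-negative integer $|\{p:\{\tau_{p,1},\dots,\tau_{p,n}\}=B\}|$. The determinant normalization is only a choice of \emph{ordering} of the weights at each fixed point; in the commutative ring $\Z[J_n^{\C}]$ a reordering changes nothing, so the coefficient of $\prod_{e\in B}e$ in $\varphi_{\Omega}([M])$ is $c_B=\sum_{p\colon B_p=B}\varepsilon(p)$, in which fixed points of opposite sign genuinely cancel. (If your claim were correct, every element of $\operatorname{Im}\bigl(\varphi_{\Omega}|_{\Zc_*^{\C}}\bigr)$ would have non-negative coefficients; since this image is a subgroup of $\Z[J_n^{\C}]$, it would then be trivial, contradicting Corollary \ref{COR3}.) Relatedly, the factorization $\varphi_*^U=\Phi\circ\varphi_{\Omega}$ with $\Phi$ a \emph{ring} homomorphism cannot be taken literally: there is no ring map from the commutative algebra $\Z[J_n^{\C}]$ to $\Lambda_{\Z}^*(J_n^{\C})$ sending each $\tau$ to $\tau$, since the images anticommute; at best one has a $\Z$-linear relabelling map, and even then the two expressions differ by signs depending on $B$.

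The argument is easily repaired, and then it does reduce to Theorem \ref{Thm-Darby} exactly as you intend. Fix, for every spanning $n$-subset $B\subset J_n^{\C}$, a reference ordering $(e_1,\dots,e_n)$ and set $\delta_B=\det[e_1\cdots e_n]\in\{\pm1\}$. For a fixed point $p$ with weight set $B$, the normalization $\det[\tau_{p,1}\cdots\tau_{p,n}]=\varepsilon(p)$ means the permutation carrying $(e_1,\dots,e_n)$ to $(\tau_{p,1},\dots,\tau_{p,n})$ has sign $\varepsilon(p)\delta_B$, so $p$ contributes $\varepsilon(p)\delta_B\,e_1\wedge\dots\wedge e_n$ to $\varphi_*^U([M])$. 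Hence the coefficient of $e_1\wedge\dots\wedge e_n$ in $\varphi_*^U([M])$ is $\delta_B\,c_B$, i.e.\ it agrees with the coefficient of $\prod_{e\in B}e$ in $\varphi_{\Omega}([M])$ up to the fixed sign $\delta_B$. Since the wedges attached to distinct subsets $B$ are $\Z$-linearly independent in $\Lambda_{\Z}^n(J_n^{\C})$ (your point (ii), valid once $\Lambda_{\Z}^*(J_n^{\C})$ is understood as the exterior algebra on the free $\Z$-module with basis the set $J_n^{\C}$), it follows that $\varphi_*^U([M])=0$ if and only if $c_B=0$ for all $B$, if and only if $\varphi_{\Omega}([M])=0$; Theorem \ref{Thm-Darby} then gives $[M]=0$. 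This corrected computation is precisely the ``straightforward'' verification the paper leaves implicit.
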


To describe the image of $\varphi_*^U$, we introduce the definitions of essential and faithful polynomials as in the 2-torus case.

\begin{defn}
An exterior polynomial $g = \sum_i \tau_{i,1}\wedge\cdots\wedge\tau_{i,m}$ of degree $2m\leq 2n$ in $\Z_2[J_n^{\C}]$ is called an {\em essential exterior $T^n$-polynomial} if for each $i$, the elements in $\{\tau_{i,1},\dots,\tau_{i,m}\}$ are linearly independent in $\text{Hom}(T^n,S^1)$. An essential exterior $T^n$-polynomial of degree $2n$ is also called {\em faithful}. 
\end{defn}

For a unitary toric manifold $M^{2n}$, the image $\varphi_*^U([M])$ is a faithful extorior polynomial. Let $J_n^{*\C} = \text{Hom}(S^1, T^n)$ be the dual space of $J_n^{\C}$, both $J_n^{\C}$ and $J_n^{*\C}$ are isomorphic to $\Z^n$, where the dual homomorphism is defined by a composition as in the 2-torus case. Similarly, we can define essential and faithful $T^n$-polynomials in $\Z_2[J_n^{*\C}]$. Each faithful polynomial $g$ in $\Lambda_{\Z}^*(J_n^{\C})$ has a dual $g^*$ which is faithful in $\Lambda_{\Z}^*(J_n^{*\C})$. A differential operator $d_*$ on $\Lambda_{\Z}^*(J_n^{*\C})$ was defined in \cite{Darby} as follows: for each monomial $\rho_1\wedge\dots\wedge\rho_k \in \Lambda[J_n^{*\C}]$ with $k\geq 1$, define
    \begin{equation}\label{Diff-2}
        d_k(\rho_1\dots \rho_k) = 
        \begin{cases}
        \sum_{i=1}^k (-1)^{i+1} \rho_1\dots \rho_{i-1} \hat{\rho_{i}} \rho_{i+1}\dots \rho_{k}  & \text{if}\  k>1 \vspace{2mm} \\  
        1 & \text{if}\ k=1
        \end{cases}
    \end{equation} 
and $d_0(1) = 0$, where the symbol $\hat{\rho_{i}}$ means that $\rho_i$ is deleted. It is easy to check that $d^2=0$. For unitary torus manifolds, the existence theorem similar to Theorem \ref{Thm-LT} can be stated as follows.

\begin{thm}[\cite{Darby} Theorem 8.5]\label{Thm-Darby2}
 Let $g = \sum_i t_{i,1}\wedge\dots\wedge t_{i,n}$ be a faithful extorior $T^n$-polynomial in $\Lambda_{\Z}^*(J_n^{\C})$. Then $g\in Im\varphi_n^U$ if and only if $d(g^*) = 0$.
\end{thm}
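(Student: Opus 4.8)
The plan is to adapt L\"u--Tan's proof of Theorem~\ref{Thm-LT} to the unitary setting, working over $\Z$ and keeping track of the orientation signs that are invisible over $\Z_2$. The first ingredient is the unitary analogue of the localization criterion of Theorem~\ref{Thm-tD}, derived from tom Dieck's integrality theorem \cite{tD1} together with the pullback squares of Hanke \cite{Hanke} and Darby \cite{Darby} recalled above; for a faithful $g$ it reads as follows. Writing each exterior monomial $\tau_{i,1}\wedge\cdots\wedge\tau_{i,n}$ with its factors ordered so that the integral matrix $[\tau_{i,1}\cdots\tau_{i,n}]$ has determinant $\varepsilon_i$ (which is $\pm1$ once the monomial is unimodular, and every monomial of an element of $\mathrm{Im}\,\varphi_n^U$ is unimodular by \cite{Masuda}), the class $g$ lies in $\mathrm{Im}\,\varphi_n^U$ if and only if, for every symmetric polynomial $f$ over $\Z$, the residue sum
$$
R_f \;=\; \sum_i \varepsilon_i\,\frac{f\bigl(c_1^{T^n}(\tau_{i,1}),\dots,c_1^{T^n}(\tau_{i,n})\bigr)}{c_1^{T^n}(\tau_{i,1})\cdots c_1^{T^n}(\tau_{i,n})}
$$
lies in $H^*(BT^n;\Z)=\Z[x_1,\dots,x_n]$. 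The first step of the proof is to establish this reduction from the $MU$-theoretic statement to ordinary equivariant cohomology: since we are in the top dimension $2n$ with isolated fixed points, the $MU$-theoretic Euler classes and the higher generators $Y_{V,d}$ of Darby's pullback square play no role, and one lands in $\Z[x_1,\dots,x_n]$ with ordinary symmetric polynomials.

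The second step is a pole analysis of $R_f$. Identifying $J_n^{\C}$ with $\Z^n$, each $\tau_{i,j}$ becomes the linear form $\ell_{i,j}=c_1^{T^n}(\tau_{i,j})$, and faithfulness says that within each monomial $\ell_{i,1},\dots,\ell_{i,n}$ is a basis with $\det[\ell_{i,1}\cdots\ell_{i,n}]=\varepsilon_i=\pm1$. Viewed in $\mathbb{Q}(x_1,\dots,x_n)$, the rational function $R_f$ has along each hyperplane $H_\rho=\{\rho=0\}$, $\rho$ primitive, at most a simple pole, since inside a single monomial the $\ell_{i,j}$ are independent. Hence $R_f\in\mathbb{Q}[x_1,\dots,x_n]$ for every symmetric $f$ if and only if $\mathrm{Res}_{H_\rho}R_f=0$ for every primitive $\rho$ and every $f$; because each monomial of $g$ is unimodular, polynomiality over $\mathbb{Q}$ already forces it over $\Z$, so the picture is exactly that of the $\Z_2$ case of Theorem~\ref{Thm-LT} except for the presence of the signs $\varepsilon_i$.

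The third and decisive step identifies these residue conditions with the equations $d_n(g^*)=0$. The hyperplane $H_\rho$ is dual to a monomial $\sigma$ with $n-1$ factors in $\Lambda_{\Z}^{*}(J_n^{*\C})$, i.e.\ an $(n-2)$-simplex of $X(\Z^n)$; restricting $R_f$ to $H_\rho$ keeps only the monomials $i$ whose denominator is divisible by $\rho$, and the residue is again a residue sum of the same shape, now on a lattice of rank $n-1$, attached to a faithful polynomial $g_\rho$ in $n-1$ variables whose dual $g_\rho^*$ is, up to an overall sign, the contraction (link) of $g^*$ along $\sigma$. Unravelling this, and using the rank-$(n-1)$ localization criterion inductively to analyse $g_\rho$, one shows that $\mathrm{Res}_{H_\rho}R_f=0$ for all symmetric $f$ precisely when the coefficient of $\sigma$ in $d_n(g^*)$ vanishes. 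Letting $\rho$ (equivalently $\sigma$) run over all walls then gives: $R_f$ is a polynomial for every $f$ if and only if $d_n(g^*)=0$; combined with the localization criterion and the injectivity of $\varphi_n^U$ (Corollary~\ref{Cor-Darby}), this is the statement.

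The step I expect to be the genuine obstacle is the bookkeeping of signs, which is vacuous in the $\Z_2$ argument. Every move introduces one: the normalization $\det[\tau_{i,1}\cdots\tau_{i,n}]=\varepsilon_i$ that defines $\varphi_n^U$; the definition of the dual $g^*$, where passing from a basis to its dual basis is an inverse-transpose and carries a determinant sign; the alternating factor $(-1)^{i+1}$ in the definition~\eqref{Diff-2} of $d_k$; and the effect on a determinant of deleting the $\rho$-row and column and restricting to $H_\rho$. Making all of them cancel, so that ``the coefficient of $\sigma$ in $d_n(g^*)$ is zero'' is exactly the residue-vanishing condition, is where the real work lies. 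I would organize it by fixing once and for all compatible orientation conventions for the tangent representation at each fixed point and for the dual bases, verifying the residue formula on a single monomial before summing, and treating the low-rank cases separately --- in particular $n=1$, where there is no room to permute and the sign normalization of $\varphi_n^U$ must be handled with care --- as the base of the induction.
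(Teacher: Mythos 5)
The paper does not prove this statement at all --- it is quoted verbatim from Darby's thesis (\cite{Darby}, Theorem 8.5) --- so your attempt has to be measured against Darby's argument, and there it has a genuine gap at your Step 1. The criterion you start from, namely that $g\in \mathrm{Im}\,\varphi_n^U$ if and only if every residue sum $R_f$ lies in $H^*(BT^n;\Z)=\Z[x_1,\dots,x_n]$, is not what tom Dieck \cite{tD1}, Hanke \cite{Hanke} and Darby's refined pullback square give you: they characterize $\mathrm{Im}\,\varphi_\Omega$ inside $MU_*[e_V^{-1}]$, i.e.\ by integrality conditions involving the $MU$-theoretic Euler classes and the classes $Y_{V,d}$ (equivalently, the universal formal group law), and these are a priori strictly stronger than integrality of first-Chern-class residues. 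Your assertion that ``in top dimension with isolated fixed points the $MU$-Euler classes and the $Y_{V,d}$ play no role'' is precisely the hard sufficiency direction, and it is not justified; note that in the $\Z_2$ case the corresponding reduction is itself a theorem (Theorem~\ref{Thm-tD}, the tom Dieck--Kosniowski--Stong criterion as proved in \cite{LT1}), not a formality. That ordinary integrality is genuinely weaker can be seen already for $n=1$: fixed-point data consisting of two points with weights $2$ and $-2$ and suitable signs satisfies $R_f\in\Z[x]$ for every $f$, yet is not realizable by a unitary toric manifold (the action would be ineffective); it is ruled out only by the finer $MU$/$K$-theoretic integrality, or by unimodularity of the weights. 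So even granting your Steps 2--3, what you obtain is the necessity of $d_n(g^*)=0$ together with the combinatorial equivalence ``all residues vanish iff $d_n(g^*)=0$''; the existence half of the theorem is not reached.

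To close the gap you must either prove the unitary analogue of the Kosniowski--Stong reduction for unimodular fixed-point data --- that is, show that $d_n(g^*)=0$ forces the full $MU$-integrality demanded by the Hanke--Darby square, which is essentially how Darby argues, working in $MU^*(BT^n)\cong MU^*[[x_1,\dots,x_n]]$ rather than in ordinary cohomology --- or replace Step 1 by a geometric realization argument, e.g.\ via Hattori--Masuda multi-fan theory \cite{HMasuda,Masuda}: $d_n(g^*)=0$ is exactly the completeness condition for the nonsingular multi-fan attached to $g$, and complete nonsingular multi-fans are realized by unitary torus manifolds. Two smaller points: ``faithful'' must be read as saying that each monomial is a $\Z$-basis of $\mathrm{Hom}(T^n,S^1)$ (unimodularity), since mere $\Z$-linear independence does not even allow the dual $g^*$ to be defined; and the sign bookkeeping you defer (the normalization of $\varphi_*^U$, the dual bases, the factor $(-1)^{i+1}$ in the differential \eqref{Diff-2}, and the behaviour of determinants under deletion) is real but routine compared with the reduction above.
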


Let $0\leq m\leq n-1$, let $\E_m(J_n^{*\R})$ be the set of all essential extorior polynomials of degree $2(m+1)$ in $\Z[J_n^{*\C}]$. As in the 2-torus case, the graded polynomial ring $\E_*(J_n^{*\C}) = \bigoplus_{0\leq m\leq n-1} \E_m(J_n^{*\C})$ can be regarded as a chian complex with chain map $d_*$. This point of view is similar to the definition of {\em universal $\Z^n$-space} $X(\Z^n)$ (\cite{BGV}).

\begin{defn}
    A subset $\{v_1,\dots,v_m\}$ of  $\Z^n$ is  {\em unimodular} if span$\{v_1,\dots,v_m\}$ is a direct summand of $\Z^n$ with rank $m$. Note that a subset of a unimodular set is itself unimodular. The collection of all unimodular subsets of $\Z^n$ forms a simplicial complex $X(\Z^n)$ on unimodular vertices, which is called the {\em universal $\Z^n$-complex}.
\end{defn}

Note that this universal $\Z^n$-complex $X(\Z^n)$ is different from the universal complex characterizing the quasitoric manifolds by Davis and Januszkiewicz in \cite{DJ}. It is clear that the simplicial complexes $X(\Z^n)$ is $(n-1)$-dimensional and pure. As showed in \cite{BGV}, the universal complex $X(\Z_2^n)$ is $(n-1)$-connected Cohen-Macaulay. As in the 2-torus case, the universal simplicial complex $X(\Z^n)$ can be used to classify the equivariant bordism of unitary torus manifolds.

\begin{proof}[{\bf Proof of Theorem \ref{THM2-1}}]
    The proof here is just the unitary version of the proof of Theorem \ref{THM1-1}. Let $0\leq m\leq n-1$. Let $r: J_n^{*\C} = \text{Hom}(S^1, T^n) \to \Z^n$ be the standard isomorphim. For an essential exterior monomial $s_i^*= s_{i,1}\cdots s_{i,m} \in \E_{m}(J_n^{*\C})$, the set $\{r(s_{i,1}),\dots,r(s_{i,m})\}$ is unimodular in $\Z^n$, it spans a simplex of dimention $m-1$ in the universal complex $X(\Z^n)$, denoted by $r(s_i^*)$. Let $(C_*(X(\Z^n)),\partial_*) = \sum_{0\leq m\leq n-1} (C_m(X(\Z^n)),\partial_m)$ be the chain complex of $X(\Z^n)$, then the map $r$ induces a homomorphism 
    $$r_m: \E_m(J_n^{*\C}) \to C_{m}(X(\Z^n)) $$
    \noindent which maps an essential polynomial  $g^* = \sum s_i^*\in \E_m(J_n^{*\C})$ to a chain $r_m(g^*) = \sum r(s_i^*) \in C_m(X(\Z^n))$. It is easy to see that $r$ is an isomorphism of groups and the following is commutative:
    $$
    \xymatrix{
    \E_m(J_n^{*\C}) \ar[d]^{d_m} \ar[r]^{r_m}  & C_m(X(\Z^n)) \ar[d]^{\partial_{m}}            \\
    \E_{m-1}(J_n^{*\C}) \ar[r]^{r_{m-1}} & C_{m-1}(X(\Z^n))                       } 
    $$
    \noindent where $d_m$ is the differential (\ref{Diff-2}) defined on $\Lambda_{\Z}^m[J_n^{*\R}]$, and $\partial_{m}$ is the differential on the chain complex $C_m(X(\Z^n))$. Regard $(\E_*(J_n^{*\R}), d)$ as a chian complex, then the above homomorphism $r$ is a chain isomorphism, it induces an isomorphism of homology
    $$r_{m*}: H_m(\E_*(J_n^{*\R})) \cong H_m(X(\Z_2^n))$$ 

 Let $g$ be a faithful extorior polynomial in $\Z[J_n^{\C}]$, then $g^*$ is a faithful extorior polynomial in $\Z_2[J_n^{*\R}]$. By Corollary\ref{Cor-Darby}, the equivariant bordism group $\Zc_n^U(T^n)$ of unitary torus manifolds is isomorphic to the image $Im\varphi_n^U$. By Theorem \ref{Thm-Darby2}, $g\in Im\varphi_n^U$ if and only if $d(g^*) = 0$, that is, $g^*$ is a closed chain in $\E_*(J_n^{*\C})$, it represents a homology class $[g^*]\in H_{n-1}(\E_*(J_n^{*\C}))\cong H_{n-1}(X(\Z^n))$, so we have
    $$\Zc_n^U(T^n) \cong Im\varphi_n^U \cong H_{n-1}(\E_*(J_n^{*\C});\Z) \cong H_{n-1}(X(\Z^n);\Z)$$
\end{proof}

The homotopy of the universal complex $X(\Z^n)$ was determined in \cite{BGV}, we recall their result to calculate the equivariant bordism groups $\Zc_n^U(\Z^n)$, the proof is straightforward.

\begin{thm}[\cite{BGV} Theorem 1.3\label{Thm-BGV2}]
For $n\geq 2$, the simplicial complex $X(\Z^n)$ is homotopy equivalent to a countably infinite wedge of $(n-1)$-spheres.
\end{thm}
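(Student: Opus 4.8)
The plan is to split the statement into a connectivity estimate and a counting estimate, and then apply the standard recognition principle for wedges of spheres. Observe first that $X(\Z^n)$ is a simplicial complex of dimension $n-1$ (its maximal faces are the bases of $\Z^n$) with only countably many faces, since $\Z^n$ is countable. Granting that $X(\Z^n)$ is $(n-2)$-connected, one argues $X(\Z^n)\simeq\bigvee S^{n-1}$ as follows: $H_{n-1}(X(\Z^n))$ equals the group of simplicial $(n-1)$-cycles, a subgroup of the free abelian group of $(n-1)$-chains (there being no chains in degree $n$), hence is free abelian; choosing a basis and representing each basis class by a map $S^{n-1}\To X(\Z^n)$ yields a map $\bigvee S^{n-1}\To X(\Z^n)$ inducing an isomorphism on integral homology in every degree, hence a homotopy equivalence by the Hurewicz and Whitehead theorems (for $n=2$ this step is merely the fact that a connected graph is homotopy equivalent to a wedge of circles). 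The number of spheres is then $\mathrm{rank}\,H_{n-1}(X(\Z^n))$, which is at most countable, so it remains to prove (i) the $(n-2)$-connectivity and (ii) that this rank is infinite.

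For (i), I would prove by induction on $n$ the stronger homotopy Cohen--Macaulay statement that for every face $\sigma$ of $X(\Z^n)$ the link $\mathrm{lk}(\sigma)$ is $(n-2-|\sigma|)$-connected, the case $\sigma=\emptyset$ being the desired estimate. The base case $n=2$ is the assertion that the Farey-type graph $X(\Z^2)$ (primitive vectors in $\Z^2$, edges the pairs of determinant $\pm1$) is connected, which follows from the Euclidean algorithm. For the inductive step, the key input is an analysis of vertex links: for a primitive vector $v$, the quotient $\Z^n\To\Z^n/\langle v\rangle\cong\Z^{n-1}$ carries any $\{w_1,\dots,w_k\}$ with $\{v,w_1,\dots,w_k\}$ unimodular to a unimodular subset, inducing a simplicial map $\mathrm{lk}_{X(\Z^n)}(v)\To X(\Z^{n-1})$; using this map, together with the relative complexes that arise when further vertices are constrained, one transports the inductive connectivity of $X(\Z^{n-1})$ and of its links to $\mathrm{lk}_{X(\Z^n)}(v)$, and then a Mayer--Vietoris and van Kampen bootstrap over the (contractible) closed vertex stars of $X(\Z^n)$ lifts link connectivity to connectivity of $X(\Z^n)$ itself. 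I would also remark that this $(n-2)$-connectivity is exactly the classical connectivity of the complex of partial bases of $\Z^n$, established by Maazen and by van der Kallen, so one may alternatively invoke that literature.

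For (ii), with the homotopy Cohen--Macaulay property (and its standard deletion variant for complexes with a vertex star removed) in hand, fix a chain of faces $\{v\}=\sigma_1\subset\sigma_2\subset\dots\subset\sigma_{n-2}$ and combine excision with the long exact sequences of the pairs $(\mathrm{lk}(\sigma_i),\,\mathrm{lk}(\sigma_i)\setminus\mathrm{star}(\cdot))$; since each link and each link-minus-star is acyclic in the relevant range by (i), one obtains a chain of surjections
$$H_{n-1}(X(\Z^n))\twoheadrightarrow\tilde H_{n-2}(\mathrm{lk}(\sigma_1))\twoheadrightarrow\dots\twoheadrightarrow\tilde H_{1}(\mathrm{lk}(\sigma_{n-2})).$$
Now $\mathrm{lk}(\sigma_{n-2})$ is a one-dimensional complex: after a change of coordinates it depends only on the last two coordinates and is the Farey graph $X(\Z^2)$ ``blown up'' by countably infinite fibres over each vertex (so it contains a complete bipartite subgraph on two countably infinite vertex sets), whence $\tilde H_1(\mathrm{lk}(\sigma_{n-2}))$ has infinite rank. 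Therefore $H_{n-1}(X(\Z^n))$ has countably infinite rank, and $X(\Z^n)$ is homotopy equivalent to a countably infinite wedge of $(n-1)$-spheres, as claimed.

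The main obstacle is step (i), and specifically the analysis of the vertex links $\mathrm{lk}_{X(\Z^n)}(v)$: unlike the finite complex $X(\Z_2^n)$ of Theorem~\ref{Thm-BGV}, here the link is infinite and the natural map $\mathrm{lk}_{X(\Z^n)}(v)\To X(\Z^{n-1})$ is highly non-injective on vertices (each vertex of $X(\Z^{n-1})$ lifts to a $\Z$-family of vertices of the link), so the connectivity of the link cannot be read off by a naive comparison and requires the careful relative induction indicated above; carrying out the accompanying Mayer--Vietoris and van Kampen arguments rigorously for these infinite complexes is the technical heart of the proof.
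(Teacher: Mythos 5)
First, a point of comparison: the paper itself gives no proof of this statement --- it is quoted directly from \cite{BGV} (Theorem 1.3) as an external input --- so your proposal is being measured against the cited literature rather than an internal argument. Your skeleton is the standard and correct one: $X(\Z^n)$ is $(n-1)$-dimensional with countably many simplices, an $(n-2)$-connected complex of dimension $n-1$ with free top homology is a wedge of $(n-1)$-spheres (Hurewicz plus Whitehead for $n\geq 3$, the graph case for $n=2$), and the number of spheres is $\mathrm{rank}\, H_{n-1}$. Your identification of $X(\Z^n)$ with the complex of partial bases of $\Z^n$ is also correct (a set spans a rank-$m$ direct summand iff it extends to a basis), so step (i) can legitimately be imported from Maazen/van der Kallen. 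However, as a self-contained argument the proposal has a genuine gap exactly at the point you flag: the proposed ``Mayer--Vietoris and van Kampen bootstrap over closed vertex stars'' cannot by itself yield $(n-2)$-connectivity. High connectivity of all links is a purely local condition and never bounds the connectivity of the whole complex without a substantial global input (this is why Maazen's and van der Kallen's proofs are delicate inductions using the division algorithm/stable range, not link comparisons); moreover, as you yourself note, the comparison map $\mathrm{lk}(v)\To X(\Z^{n-1})$ has infinite fibres, so the inductive transfer you describe is a plan, not a proof. Either cite the partial-basis literature (or \cite{BGV}) for (i), or expect to reproduce an argument of that depth.

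Step (ii) also contains an unjustified step: the chain of surjections onto link homology needs, at each stage, vanishing of $\tilde H_{n-2}$ of the \emph{deletion} (the complex with an open star removed), which you invoke as a ``standard deletion variant'' of homotopy Cohen--Macaulayness but do not establish; for these infinite complexes it is not automatic. Fortunately (ii) can be done much more cheaply, bypassing links entirely: since $X(\Z^n)$ has no $n$-simplices, $H_{n-1}$ is the full group of $(n-1)$-cycles, and for each $k\in\Z$ the full subcomplex on the $2n$ vertices $\pm e_1,\dots,\pm e_{n-1},\pm(e_n+k e_1)$ is an octahedral $(n-1)$-sphere (every subset avoiding antipodal pairs is a partial basis). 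Its fundamental cycle contains the facet $\{e_1,\dots,e_{n-1},e_n+k e_1\}$, which appears in no other member of the family, so these cycles are $\Z$-linearly independent and $\mathrm{rank}\,H_{n-1}(X(\Z^n))$ is countably infinite. With (i) supplied by citation and (ii) replaced by this observation, your recognition argument does complete a proof of the theorem.
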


\begin{cor}\label{COR3}
    The group $\Zc_2^U(T^1) \cong \Z$ and $\Zc_{2n}^U(T^n)$ is isomorphic to a countably infinite direct sum of $\Z$ for $n\geq 2$.
\end{cor}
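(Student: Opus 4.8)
The plan is to read the result off from the isomorphism $\Zc_{2n}^U(T^n)\cong H_{n-1}(X(\Z^n);\Z)$ of Theorem~\ref{THM2-1} together with the known homotopy type of $X(\Z^n)$, treating $n\geq 2$ and $n=1$ separately, since Theorem~\ref{Thm-BGV2} is stated only for $n\geq 2$ and the bottom degree is slightly special.

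\textbf{Case $n\geq 2$.} By Theorem~\ref{Thm-BGV2}, $X(\Z^n)$ is homotopy equivalent to a countably infinite wedge $\bigvee_{i\in\N}S^{n-1}$. Since homology is a homotopy invariant, and reduced singular homology carries a countable wedge of CW complexes to the corresponding countable direct sum (write the wedge as the increasing union of its finite sub-wedges and use that singular homology commutes with that filtered colimit, or iterate Mayer--Vietoris), we get $\tilde H_{k}(X(\Z^n);\Z)\cong\bigoplus_{i\in\N}\tilde H_{k}(S^{n-1};\Z)$. In degree $k=n-1\geq 1$ reduced and unreduced homology coincide, so $H_{n-1}(X(\Z^n);\Z)\cong\bigoplus_{i\in\N}\Z$; with Theorem~\ref{THM2-1} this shows that $\Zc_{2n}^U(T^n)$ is a countably infinite direct sum of copies of $\Z$.

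\textbf{Case $n=1$.} The only unimodular subsets of $\Z^1=\Z$ are $\emptyset$ and the singletons $\{1\},\{-1\}$ (a set of size $\geq 2$ cannot span a rank-$\geq 2$ direct summand of $\Z$), so $X(\Z^1)$ is two points, i.e. $S^0$. Here it matters that the complex $(\E_*(J_n^{*\C}),d)$ used to prove Theorem~\ref{THM2-1} is the \emph{augmented} simplicial chain complex of $X(\Z^n)$: the clause $d_1(\rho)=1$ in the differential (\ref{Diff-2}) is exactly the augmentation onto the constants $\Z$, so $H_*(\E_*)=\tilde H_*(X(\Z^n))$. Hence $\Zc_2^U(T^1)\cong H_0(\E_*(J_1^{*\C});\Z)=\tilde H_0(S^0;\Z)\cong\Z$. (Equivalently and concretely: over $J_1^{\C}\cong\Z$ the only faithful exterior polynomials of degree $2$ are $a\rho+b\bar\rho$, with $\rho,\bar\rho$ the two faithful irreducibles, and by Theorem~\ref{Thm-Darby2} such a class lies in $Im\varphi_1^U$ iff $d(g^*)=(a+b)\cdot 1=0$, i.e. $b=-a$; thus $Im\varphi_1^U\cong\Z$, whence $\Zc_2^U(T^1)\cong\Z$ by Corollary~\ref{Cor-Darby}.)

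\textbf{Where the difficulty lies.} There is essentially no obstacle: the corollary is a formal consequence of Theorems~\ref{THM2-1} and~\ref{Thm-BGV2}. The only two points worth spelling out are that reduced homology commutes with a countably infinite wedge of spheres, and that in the degenerate case $n=1$ — where $\tilde H_0(S^0)\cong\Z$ but $H_0(S^0)\cong\Z^2$ — one must remember that the relevant homology is the reduced one, which is precisely what the augmented complex $\E_*$ computes.
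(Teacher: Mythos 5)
Your proof is correct and follows the same route the paper intends: the corollary is read off from Theorem~\ref{THM2-1} combined with the homotopy type of $X(\Z^n)$ from Theorem~\ref{Thm-BGV2} (the paper simply calls this ``straightforward'' and writes no details). Your separate treatment of $n=1$ is a worthwhile clarification the paper glosses over: since Theorem~\ref{Thm-BGV2} only covers $n\geq 2$ and $H_0(S^0)\cong\Z^2$, one does need to observe, as you do, that $(\E_*,d)$ is the augmented complex computing $\tilde H_*(X(\Z^1))\cong\Z$, which is exactly what the condition $d(g^*)=0$ of Theorem~\ref{Thm-Darby2} gives.
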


Quasitoric manifolds are important models for $T^n$-manifolds, which are defined and studied by Davis and Januszkiewicz in \cite{DJ}, they are topological versions of toric varieties. We recall their definitions and basic properties.

\begin{defn}
    An $2n$-dimensional {\em quasitoric manifold} $\pi: M^{2n} \to P^n$ is a smooth closed $2n$-dimensional manifold $M^{2n}$ with a locally standard $T^n$-action such that its orbit space is homeomorphic to a $n$-dimensional simple convex polytope $P^n$, here locally standard means that the action is locally isomorphic to a faithful representation of $T^n$ on $\C^n$. 
\end{defn}

A quasitoric manifold $\pi: M^{2n}\to P^n$ determines a characteristic function $\lambda:\F(P^n)\to J_n^{*\C}$, where $\F(P^n)$ is the set of all facets of $P^n$. For each facet $F\in\F(P^n)$, the preimage $\pi^{-1}(F)$ is submanifold of dimention $2(n-1)$, and itself is also a quasitoric manifold. Given a vertex $v = F_1 \cap\dots\cap F_n$ of $P^n$, where $F_1, \dots, F_n \in \F(P^n)$, then the elements $\lambda(F_1),\dots,\lambda(F_n)$ form a basis of $J_n^{*\C}$. Quasitoric manifolds can be equipped with an additional structure called {\em omniorientation}, providing a combinatorial description for unitary structures.

\begin{defn}
    An {\em omniorientation} of a quasitoric manifold $M^{2n}$ consists of a choice of an orientation for $M^{2n}$ and for every facial submanifold $M^{2(n-1)} = \pi^{-1}(F_i), i = 1,... ,m$.
\end{defn}

Note that a choice of orientation for $P^n$ is equivalent to a choice of orientation for $M^{2n}$, and an omniorientation on $M^{2n}$ is equivalent to a choice of orientations for $P^n$ and all its facets. As proved by Buchstaber and Ray in \cite{BPR1} Proposition 4.5, every omniorientation of a quasitoric manifold $M^{2n}$ determines a stably complex structure on it, which is compatible with the action of the torus. So a quasitoric manifold $M^{2n}$ with an omniorientation is a special case of unitary toric manifolds. If we forget the group action, then the bordism class $[M]$ is an element of the ordinary unitary bordism group $\Omega_{2n}^U$. For $n>1$, every complex cobordism class in $\Omega_{2n}^U$ contains a quasitoric manifold with an omniorientation (\cite{BPR1} Theorem 5.9). 

\begin{defn}
    A {\em quasitoric pair} $(P^n,\Lambda)$ consists of a combinatorial oriented simple $n$-polytope $P^n$ and an integral $(n \times m)$-matrix $\Lambda$ whose columns $\lambda_i$ satisfy 
    $$\det(\lambda_{i_1}\cdots\lambda_{i_n}) = \pm 1 \text{, whenever } v = F_{i_1} \cap\dots\cap F_{i_n} \text{is a vertex of } P^n $$
    where $F_{i_j}\in\F(P^n), j=1,\dots,n, $ are some facets of $P^n$. 
\end{defn}

Two quasitoric pairs $(P_1,\Lambda_1)$ and $(P_2,\Lambda_2)$ are equivalent if and only if $P_1 = P_2$ and there exists an $(m \times m)$-permutation matrix $\Sigma$ such that $\Lambda_1 = \Lambda_2 \Sigma$, the matrix $\Sigma$ may be thought as a reordering of the facets of $P^n$. The following was showed by Buchstaber, Panov and Ray in \cite{BPR1}, see also \cite{Darby} Corollary 5.28.

\begin{lem}[\cite{BPR1}\cite{Darby}]\label{Lem-BR}
    There is a 1-1 correspondence between equivalence classes of omnioriented quasitoric manifolds and combinatorial quasitoric pairs.    
\end{lem}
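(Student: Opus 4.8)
The plan is to exhibit two constructions inverse to one another: one assigning to an omnioriented quasitoric manifold a combinatorial quasitoric pair, and one --- the Davis--Januszkiewicz construction --- producing from a pair an omnioriented quasitoric manifold; then to check that each respects the relevant equivalence and that the round trips are the identity. I will freely cite the smoothness and local-standardness statements from \cite{DJ} and \cite{BPR1}.

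\emph{From a manifold to a pair.} Given an omnioriented $\pi\colon M^{2n}\to P^n$, the orbit map restricted over $\operatorname{int}P^n$ is a principal $T^n$-bundle, and the torus fibre has a canonical orientation coming from the lattice $J_n^{*\C}$; hence an orientation of $M^{2n}$ restricts to one on $\operatorname{int}P^n$, i.e.\ determines an orientation of $P^n$ (and conversely). For each facet $F_i\in\F(P^n)$ the facial submanifold $M_i:=\pi^{-1}(F_i)$ is oriented by the omniorientation, so its normal bundle $\nu_i$ in $M^{2n}$ inherits an orientation from those of $M^{2n}$ and $M_i$. Local standardness makes $\nu_i$ a $T^n$-equivariant complex line bundle; its weight, together with the chosen orientation singling out which square root to take, pins down a well-defined primitive vector $\lambda_i\in J_n^{*\C}\cong\Z^n$, with reversal of the orientation of $M_i$ replacing $\lambda_i$ by $-\lambda_i$. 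At a vertex $v=F_{i_1}\cap\dots\cap F_{i_n}$, local standardness of the action near $\pi^{-1}(v)$ forces $\{\lambda_{i_1},\dots,\lambda_{i_n}\}$ to be a basis of $\Z^n$, so $\det(\lambda_{i_1}\cdots\lambda_{i_n})=\pm1$. Thus $(P^n,\Lambda)$ with $\Lambda=[\lambda_1\cdots\lambda_m]$ is a combinatorial quasitoric pair, and relabelling the facets right-multiplies $\Lambda$ by a permutation matrix, so equivalent manifolds yield equivalent pairs.

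\emph{From a pair to a manifold.} Given $(P^n,\Lambda)$, for each face $Q$ of $P^n$ let $G_Q\subseteq T^n$ be the subtorus whose Lie algebra is spanned by those $\lambda_i$ with $F_i\supseteq Q$; nonsingularity of $\Lambda$ guarantees $\dim G_Q=\operatorname{codim}Q$ and $G_v=T^n$ at a vertex. Set
\[
   M(P^n,\Lambda):=(T^n\times P^n)/\!\sim,\qquad (t_1,p)\sim(t_2,p)\ \Longleftrightarrow\ t_1t_2^{-1}\in G_{Q(p)},
\]
where $Q(p)$ is the face containing $p$ in its relative interior. As in \cite{DJ} (via the moment-angle manifold over $P^n$ and the free action of $\ker(\Lambda\colon T^m\to T^n)$), $M(P^n,\Lambda)$ is a smooth closed $2n$-manifold with a locally standard $T^n$-action and orbit map onto $P^n$; the orientation of $P^n$ induces one on it, and the sign of each column $\lambda_i$ --- not just the line it spans --- orients the facial submanifold over $F_i$ through the weight-$\lambda_i$ complex structure on its normal bundle, so we obtain an omniorientation. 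Permuting facets gives an equivariantly diffeomorphic omnioriented manifold, so the construction descends to equivalence classes.

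\emph{The two are mutually inverse.} Running the first construction on $M(P^n,\Lambda)$ recovers $P^n$ with its orientation and, by direct inspection of $\nu_i$, recovers each $\lambda_i$ with the prescribed sign, so we get back $(P^n,\Lambda)$. Conversely, starting from an omnioriented $M$, the classical Davis--Januszkiewicz uniqueness result provides an equivariant diffeomorphism $M\cong M(P^n,\Lambda)$ over the identity of $P^n$, and one checks it carries the omniorientation of $M$ to the one induced by $\Lambda$; reordering of facets is absorbed by the equivalence of pairs. The step I expect to be the main obstacle is the orientation bookkeeping common to both directions: one must verify carefully that a $T^n$-equivariant oriented real plane bundle with nontrivial weights is canonically a complex line bundle, that its weight is a genuine element of $\Z^n$ (not merely a line up to sign) pinned down by the orientation, and that the three pieces of data --- orientation of $M^{2n}$, orientations of all $M_i$, orientation of $P^n$ --- interlock so that ``orientation of $M^{2n}$ plus orientations of all $M_i$'' is equivalent to ``orientation of $P^n$ plus the signs of the columns of $\Lambda$.'' The smoothness and local standardness of $M(P^n,\Lambda)$ are routine and are quoted from \cite{DJ,BPR1}.
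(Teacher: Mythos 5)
The paper does not prove this lemma at all: it is quoted from \cite{BPR1} (Proposition 4.5 and the surrounding construction) and \cite{Darby} (Corollary 5.28), and your sketch reproduces essentially the same standard argument those references use --- the Davis--Januszkiewicz quotient construction $(T^n\times P^n)/\sim$ in one direction, extraction of the signed characteristic matrix from the omniorientation via the complex structures on the normal bundles of the facial submanifolds in the other, with the DJ rigidity result making the two constructions mutually inverse. Your proposal is correct as a sketch and matches the cited proof's approach; the only point to nail down (which you flag yourself) is the orientation/sign bookkeeping and the precise equivalence relation on omnioriented manifolds so that it matches the paper's notion of equivalence of pairs, which allows only facet reordering.
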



\begin{prop}\label{PROP1}
    The group $\Zc_{2n}^U(T^n)$ is generated by equivariant unitary bordism classes of $2n$-dimensional omnioriented quasitoric manifolds. 
\end{prop}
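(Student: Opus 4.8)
The plan is to imitate the proof of Corollary~\ref{COR2-1}, with small covers replaced by omnioriented quasitoric manifolds and $\Z_2$-homology replaced by integral homology. First I would dispose of the case $n=1$ by hand: Corollary~\ref{COR3} gives $\Zc_2^U(T^1)\cong\Z$, and a generator is represented by $\C P^1$ with its standard $T^1$-action, which is an omnioriented quasitoric manifold over the interval $\Delta^1$. So assume $n\geq 2$. By Theorem~\ref{THM2-1} we have $\Zc_{2n}^U(T^n)\cong H_{n-1}(X(\Z^n);\Z)$, and by Theorem~\ref{Thm-BGV2} this is a free abelian group having as a basis the homology classes of the $(n-1)$-spheres $S^{n-1}$ occurring in the wedge decomposition of $X(\Z^n)$; call them the \emph{basic spheres}. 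It therefore suffices to show that the bordism class which, via Theorem~\ref{THM2-1}, corresponds to each basic sphere is represented by a $2n$-dimensional omnioriented quasitoric manifold.

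The next step is to pass from a basic sphere to a quasitoric pair. Since $X(\Z^n)$ is pure of dimension $n-1$, a basic sphere is realized by an embedded simplicial $(n-1)$-sphere $\Sigma\subseteq X(\Z^n)$; using the explicit construction of the wedge summands in \cite{BGV}, I would take $\Sigma$ so that its polytopal dual is the boundary complex $\partial P^n$ of a simple convex polytope $P^n$, with the vertices of $\Sigma$ corresponding to the facets of $P^n$ and the top simplices of $\Sigma$ to the vertices of $P^n$. Sending each facet of $P^n$ to the corresponding vertex of $\Sigma$, regarded as an element of $J_n^{*\C}\cong\Z^n$, defines a map $\lambda\colon\F(P^n)\to J_n^{*\C}$; it is well defined because adjacent top simplices of $\Sigma$ share their common vertices, and since every vertex set of $\Sigma$ is unimodular, the integral matrix $\Lambda$ with columns $\lambda(F_i)$ satisfies $\det(\lambda(F_{i_1})\cdots\lambda(F_{i_n}))=\pm1$ at each vertex $v=F_{i_1}\cap\dots\cap F_{i_n}$. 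Fixing an orientation of $P^n$, the pair $(P^n,\Lambda)$ is a quasitoric pair, so by Lemma~\ref{Lem-BR} it determines an omnioriented quasitoric manifold $\pi\colon M^{2n}\to P^n$ with characteristic function $\lambda$.

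Finally I would identify the class of $M^{2n}$. Its fixed points are the preimages of the vertices of $P^n$, and at the fixed point over $v=F_{i_1}\cap\dots\cap F_{i_n}$ the tangent $T^n$-representation decomposes into irreducibles whose weights form the basis of $J_n^{\C}$ dual to $\{\lambda(F_{i_1}),\dots,\lambda(F_{i_n})\}$; hence $\varphi_n^U([M])$ is the faithful exterior polynomial whose dual $g^*$ is carried by the chain isomorphism $r$ to the fundamental cycle $\sum_v\sigma_v\in C_{n-1}(X(\Z^n))$ of $\Sigma$. Running this through the chain of isomorphisms in the proof of Theorem~\ref{THM2-1} shows that $[M]\in\Zc_{2n}^U(T^n)$ corresponds to the homology class of $\Sigma$, i.e.\ to the basic sphere we started with. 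Since the basic sphere classes form a $\Z$-basis of $H_{n-1}(X(\Z^n);\Z)$, the bordism classes of the resulting omnioriented quasitoric manifolds generate $\Zc_{2n}^U(T^n)$.

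The step I expect to be the main obstacle is the passage from a basic sphere to a quasitoric pair: one must know that the polytopal dual of each basic sphere is realizable as an honest \emph{simple convex polytope}, not merely an abstract simplicial sphere, for which I would rely on the concrete description of the generators in \cite{BGV} --- in the $\Z_2$ setting these turn out to be products of simplices, and one expects the integral analogues to be equally explicit. A second, bookkeeping-type difficulty, absent in the mod-$2$ case of Corollary~\ref{COR2-1}, is to check that the orientation of $M^{2n}$ and of the facial submanifolds can be chosen so that the sign convention $\det[\tau_{p,1}\cdots\tau_{p,n}]=\varepsilon(p)$ producing $\varphi_n^U([M])$ matches the orientation of the cycle $\Sigma$; this is precisely what the closedness condition $d(g^*)=0$ of Theorem~\ref{Thm-Darby2} guarantees, so no genuine obstruction arises, but the signs must be tracked with care.
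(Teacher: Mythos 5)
Your proposal is correct and follows essentially the same route as the paper's own proof: reduce via Theorem~\ref{THM2-1} and Theorem~\ref{Thm-BGV2} to the classes of the basic spheres, dualize each basic sphere to a simple polytope with a unimodular coloring, and invoke Lemma~\ref{Lem-BR} to produce the omnioriented quasitoric manifold representing that class. Your two flagged concerns (polytopal realizability of the dual sphere and the orientation/sign bookkeeping) are points the paper's proof passes over silently, so your write-up is, if anything, more careful than the original.
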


\begin{proof}
    By Theorem \ref{Thm-BGV2}, the universal complex $X(\Z^n)$ is homotopy to the wedge of countably infinite spheres $S^{n-1}$, we call such spheres {\em basic spheres} of $X(\Z^n)$. By Theorem \ref{THM2-1}, each equivariant bordism class of $\Zc_n(T^n)$ corresponds to an element in $H_{n-1}(X(\Z^n);\Z)$, which is generalized by the homology class of basic spheres. Since $X(\Z^n)$ is pure, the dual of a basic sphere $S^{n-1}$ is the boundary $\partial{P^n}$ of a simple polytope $P^n$, and the unimodular subsets of $S^{n-1}$ induce a $J_n^{*\R}$-coloring $\lambda$ of $P^n$. By Lemma \ref{Lem-BR}, the quasitoric pair $(P^n,\lambda)$ determines a unique omnioriented quasitoric manifold $\pi:M^{2n}\to P^n$ such that $\lambda$ is the characteristic function. So the equivariant bordism class corresponding to the baisc sphere $S^{n-1}$ is represented by an omnioriented quasitoric manifold. 
\end{proof}

\begin{proof}[{\bf Proof of Proposition \ref{PROP2-1}}]
   The case for $n=1$ is clear and was shown by \cite{Darby} Corollary 8.8. For $n>1$, we consider equivarint cobordism classes $[M_1]$ and $[M_2]$ in $\Zc_{2n}^U(T^n)$, represented by omnioriented quasitoric manifolds over quotient polytopes $P_1$ and $P_2$, respectively. It then suffices to construct a third such manifold $M$ such that $[M] = [M_1] + [M_2]$. The connect sum in the proof of \cite{BPR1} Theorem 5.9 can be used to define a omnioriented quasitoric manifold $M = M_1\#M_2$ over the connect sum $P^n = P_1\# P_2$. Or we can just use \cite{Darby} Proposition 8.9 to get a quasitoric pair $(P,\lambda)$ and the corresponding omnioriented quasitoric manifold $M$ such that $\varphi_n^U(M) =\varphi_n^U(M_1) + \varphi_n^U(M_2)$. Since $\varphi_n^U$ is monomorphic by Corollary \ref{Cor-Darby}, the equivariant bordism class $[M] = [M_1] + [M_2]\in \Zc_{2n}^U(T^n)$, so the quasitoric manifold $M$ is what we need.  
\end{proof}

\end{document}